\newcommand*\Mod[1]{ \; (\textup{mod} \; #1 )}
\newcommand*\tops[2]{\texorpdfstring{#1}{#2}}
\newcommand*{\Z}{\mathbb{Z}}
\renewcommand{\phi}{\varphi}
\newtheorem{Thm}{Theorem}[section]
\newtheorem{Lem}[Thm]{Lemma}
\newtheorem{Cor}[Thm]{Corollary}
\theoremstyle{definition}
\newtheorem{Def}[Thm]{Definition}
\theoremstyle{remark}
\providecommand{\keywords}[1]
{
  \small	
  \textbf{\textit{Keywords---}} #1
}
\title{Chromatic numbers of Cayley graphs of abelian groups: Cases of small dimension and rank}
\author[a]{Jonathan Cervantes}
\author[b]{Mike Krebs}
\affil[a]{University of California, Riverside, Dept. of Mathematics, Skye Hall, 900 University Ave., Riverside, CA 92521, jcerv092@ucr.edu}
\affil[b]{California State University, Los Angeles, Dept. of Mathematics, 5151 State University Drive, Los Angeles, CA 91711, mkrebs@calstatela.edu}
\date{\today}
\begin{document}

\maketitle

\keywords{graph, chromatic number, abelian group, Cayley graph, circulant graph}

\begin{abstract}
A connected Cayley graph on an abelian group with a finite generating set $S$ can be represented by its Heuberger matrix, i.e., an integer matrix whose columns generate the group of relations between members of $S$.  In a previous article, the authors laid the foundation for the use of Heuberger matrices to study chromatic numbers of abelian Cayley graphs.  We call the number of rows in the Heuberger matrix the {\it dimension}, and the number of columns the {\it rank}.  In this paper,  we give precise numerical conditions that completely determine the chromatic number in all cases with dimension $1$; with rank $1$; and with dimension $\leq 3$ and rank $\leq 2$.  For such a graph without loops, we show that it is $4$-colorable if and only if it does not contain a $5$-clique, and it is $3$-colorable if and only if it contains neither a diamond lanyard nor a $C_{13}(1,5)$, both of which we define herein.  It is shown in another paper that as a special case of our theorem for dimension $3$ and rank $2$, we obtain improved upper bounds for minimal periods of optimal colorings of $6$-valent integer distance graphs.
\end{abstract}

\section{Introduction}


Recall that a subset $S$ of a group $G$ is said to be \emph{symmetric} if $x^{-1}\in S$ whenever $x\in S$.  Given a group $G$ and a symmetric subset $S$ of $G$, the \emph{Cayley graph} $\text{Cay}(G,S)$ is defined to be the graph whose vertex set is $G$, where two vertices $x$ and $y$ are adjacent if and only if $x^{-1}y\in S$.

Given an $m\times r$ integer matrix $M$, let $H$ be the set of all linear combinations of the columns of $M$ with integer coefficients.  Let $\mathbb{Z}$ denote the group of integers under addition, and let $\mathbb{Z}^m$ denote the $m$-fold direct product of  $\mathbb{Z}$ with itself.  Let $e_j\in\mathbb{Z}^m$ denote the $m$-tuple (regarded as a column vector) whose $i$th component is $1$ if $i=j$ and is $0$ otherwise.  Let $S=\{H\pm e_1,\dots,H\pm e_m\}$.  We may then form the Cayley graph whose underlying group is $\mathbb{Z}^m/H$ with respect to the generating set $S$.  We denote by $M^{\text{SACG}}_X$ the graph formed in this manner.  We call $M^{\text{SACG}}_X$ a {\it standardized abelian Cayley graph}, and we call $M$ an associated {\it Heuberger matrix}.  As discussed in \cite{Cervantes-Krebs-general-publilshed}, the study of chromatic numbers of Cayley graphs on abelian groups can be reduced to the study of standardized abelian Cayley graphs and their Heuberger matrices.  Many, many particular cases of 
chromatic numbers of Cayley graphs on abelian groups have been studied; see the introduction to \cite{Cervantes-Krebs-general-publilshed} for a long list of examples.

Our main results (Theorems \ref{theorem-m-equals-2} and \ref{theorem-m-equals-3}) give precise and easily checked numerical conditions that completely determine the chromatic number when the associated Heuberger matrix is $2\times 2$ or $3\times 2$.  These results can be summarized as follows:  Suppose such a graph does not have loops.  Then it is $4$-colorable if and only if it does not contain a $5$-clique, and it is $3$-colorable if and only if it contains neither a diamond lanyard (see Def. \ref{def-diamond-lanyard}) nor a $\text{Cay}(\mathbb{Z}_{13},\{\pm 1,\pm 5\})$.

Whether such subgraphs occur, we show, can be ascertained quickly from the entries of the Heuberger matrix.  After excluding some trivial exceptional cases, one first puts the matrix into a certain standardized form (lower triangular form with positive diagonal entries for $2 \times 2$ matrices, ``modified Hermite normal form'' for $3\times 2$ matrices) without changing the associated graph.  Theorems \ref{theorem-m-equals-2} and \ref{theorem-m-equals-3} then provide formulas for the chromatic number of graphs with matrices in this form.

We briefly sketch the method of proof.  For $2\times 2$ matrices, the main result (Thm. \ref{theorem-m-equals-2}) follows quickly by combining Heuberger's theorem on chromatic numbers of circulant graphs with the methods of \cite{Cervantes-Krebs-general-publilshed}.  The principal result for $3\times 2$ matrices (Thm. \ref{theorem-m-equals-3}) requires more work.  The central idea is to apply the graph homomorphisms of \cite{Cervantes-Krebs-general-publilshed} to obtain upper bounds on the chromatic number, utilizing the previous results from the $2\times 2$ case.

We remark that both Thm. \ref{theorem-m-equals-2} and Thm. \ref{theorem-m-equals-3} relate to the so-called ``shortest vector problem'' from lattice theory.  It follows directly from Thm. \ref{theorem-m-equals-3} (respectively, Thm. \ref{theorem-m-equals-2}) that if the minimal $L^1$ norm of a vector in the $\mathbb{Z}$-span of a $3\times 2$ (resp. $2\times 2$) integer matrix without zero rows is greater than $3$ (resp. $6$), then the corresponding graph is $3$-colorable.  

In \cite{Zhu}, Zhu finds the chromatic number for an arbitrary integer distance graph of the form $\text{Cay}(\mathbb{Z},\{\pm a,\pm b,\pm c\})$, where $a, b,$ and $c$ are distinct positive integers.  Such graphs, as shown in \cite{Cervantes-Krebs-general-publilshed}, have associated $3\times 2$ matrices.  In another companion paper \cite{Cervantes-Krebs-Zhu}, we demonstrate how Thm. \ref{theorem-m-equals-3} yields Zhu's theorem as a corollary of our main theorem about $3\times 2$ Heuberger matrices.  Indeed, we moreover show there that this method gives us vastly improved upper bounds for minimal periods of optimal colorings of such graphs.

One obvious future direction for this work will be to investigate what happens when the matrices are larger.  For example, the case of a graph $X$ with an associated $m\times 2$ Heuberger matrix when $m\geq 4$ seems well within reach using methods developed in this paper, and we plan to tackle that next.

As we show in the proofs of our main theorems, when an abelian Cayley graph $X$ has an associated $2\times 2$ or $3\times 2$ Heuberger matrix, an optimal coloring for $X$ can always be realized as a pullback, via a graph homomorphism, of a coloring of a circulant graph.  We pose the question (akin to that asked in \cite{MathOverflow}): Is that statement true for all connected, finite-degree abelian Cayley graphs?

Moreover, we propose the following conjecture: Let $X$ be a standardized abeliam Cayley graph with an associated Heuberger matrix $M_X$.  If $X$ does not have loops, and the rank over $\mathbb{Z}_3$ of the reduction of $M_X$ modulo $3$ is $\leq 1$, then $X$ is $3$-colorable.

This article depends heavily on \cite{Cervantes-Krebs-general-publilshed}, which we will refer to frequently.  The reader should assume that all notation, terminology, and theorems used but not explained here are explained there.

\section{Main theorems}\label{section-main-theorems}

In \cite{Cervantes-Krebs-general-publilshed}, we laid the groundwork for our main techniques, and we employed them to prove the Tomato Cage Theorem, which completely determines the chromatic number in the case where $H$ has rank $1$.  In this section, we turn our attention to our main results, which concern the case where $H$ has rank $2$.  Let $X$ be a standardized abelian Cayley graph, and let $m$ be the number of rows in an associated Heuberger matrix $M_X$.

In Subsection \ref{subsection-m-is-1}, we quickly dispense with the case where $m=1$.


For $m=2$, we first apply isomorphisms to $X$ as in \cite{Cervantes-Krebs-general-publilshed} to put the  matrix in a standard form without changing the associated graph.  We then clear aside the somewhat aberrant situations where $X$ is bipartite, $X$ has loops, or the matrix has a zero row.  Excluding these possibilities, we show that if the matrix entries are not relatively prime, then $\chi(X)=3$; otherwise, $X$ is isomorphic to a circulant graph, and its chromatic number is given by a theorem of Heuberger's.  Subsection \ref{subsection-m-2} contains the precise statements and proofs.

For $m=3$, we again begin by putting the matrix into a standard form.  In this case it is a certain form we call ``modified Hermite normal form,'' as detailed in Subsection \ref{subsection-restricted-Hermite-normal-form}.  Again the ``aberrant'' situations can be handled quickly.  We then determine the chromatic number in the remaining cases.  To do so, we subtract rows to produce a homomorphism to a graph with an associated $2\times 2$ matrix, for which we already have a complete theorem.  When this fails to produce a $3$-coloring, we modify the mapping.  We show that whenever we are unable in this manner to properly $3$-color $X$, it must be that in fact $X$ is not properly $3$-colorable, and our procedure instead yields a  $4$-coloring of $X$.  These unusual cases, we show, fall into one of six families, and we state precise numerical conditions for when they occur.  Subsection \ref{subsection-m-3} contains the precise statements and proofs.

As a by-product of our proofs, we show that for the class of graphs we consider, if they don't have loops, then $K_5$ (the complete graph on $5$ vertices) is the only obstacle to $4$-colorability, and ``diamond lanyards'' and ``$C_{13}(1,5)$'' (both of which we define in Subsection \ref{subsection-diamond-lanyards}) are the only obstacles to $3$-colorability.

In Subsection \ref{subsection-algorithm}, we furnish a synopsis of the procedures involved --- an algorithm to determine the chromatic number whenever the Heuberger matrix is of size $m\times 1, 1\times r, 2\times r$, or $3\times 2$.  At \cite{Cervantes-Krebs} we provide a \emph{Mathematica} notebook in which this algorithm is implemented.


\subsection{The case \tops{$m=1$}{m=1}}\label{subsection-m-is-1}

The case $m=1$ can be dealt with immediately.

\begin{Lem}\label{lemma-m-is-1}
Suppose $X$ is a standardized abelian Cayley graph defined by $(y_1\;\cdots\;y_r)^{\text{SACG}}_X$ for some integers $y_1\,\dots,y_r$, not all $0$.  Let $e=\gcd(y_1\,\dots,y_r)$.  Then $X$ has loops (and therefore is not properly colorable) if and only if $e=1$; otherwise, we have that $\chi(X)=2$ if $e$ is even, and $\chi(X)=3$ if $e$ is odd.  
\end{Lem}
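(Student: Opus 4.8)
The plan is to read off the underlying group and generating set directly from the $1\times r$ matrix, to recognize the resulting graph as a cycle, and then to quote the elementary facts about colorings of cycles. Since $m=1$, the Heuberger matrix is the single row $(y_1\;\cdots\;y_r)$, and $H$ is the subgroup of $\Z$ that its entries generate; hence $H=e\Z$ with $e=\gcd(y_1,\dots,y_r)$, and because the $y_i$ are not all $0$ we have $e\geq 1$. The underlying group $\Z/H=\Z/e\Z=\Z_e$ therefore has order $e$, and since $S$ consists of the images of $\pm e_1=\pm 1$, I would conclude that $X=\Cay(\Z_e,\{\pm 1\})$.

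First I would settle the loop criterion: a Cayley graph has a loop exactly when some generator equals the identity, which here means $1\equiv 0\pmod{e}$, i.e.\ $e=1$; in that case $X$ carries a loop and is not properly colorable, as claimed. Assuming instead that $e\geq 2$, I would identify $\Cay(\Z_e,\{\pm 1\})$ with the cycle graph $C_e$, the degenerate case $e=2$ being a single edge. The chromatic number then follows from the classical fact that a cycle is $2$-colorable when its length is even and requires exactly $3$ colors when its length is odd, yielding $\chi(X)=2$ for even $e$ and $\chi(X)=3$ for odd $e$.

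I expect no substantive obstacle here: the entire content of the lemma is the translation from the Heuberger-matrix data to a familiar circulant graph, after which the result is immediate from elementary properties of cycles. The only point that demands a moment's care is the boundary value $e=2$, where the graph is a single edge rather than a genuine cycle but still has chromatic number $2$, in agreement with the stated formula.
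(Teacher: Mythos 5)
Your proposal is correct and follows essentially the same route as the paper: both arguments come down to identifying $X$ with the cycle $\Cay(\Z_e,\{\pm 1\})$ and then quoting the standard coloring facts for cycles, including the loop case $e=1$. The only cosmetic difference is that you compute $H=e\Z$ directly, whereas the paper reaches the same conclusion by performing column operations (the Euclidean algorithm) to replace the matrix $(y_1\;\cdots\;y_r)$ with $(e)$ and then citing its worked example on cycle graphs.
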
\begin{proof}  Applying column operations as in \cite[Lemma 2.6]{Cervantes-Krebs-general-publilshed}, we can in effect perform the Euclidean algorithm so as to acquire an isomorphic standardized abelian Cayley graph $(e\;0\;\cdots\;0)^{\text{SACG}}_{X'}=(e)^{\text{SACG}}_{X'}$.  The result follows from \cite[Example 2.1]{Cervantes-Krebs-general-publilshed}.
\end{proof}

If $y_1=\cdots=y_r=0$, then $\chi(X)=2$, by \cite[Lemma 2.11]{Cervantes-Krebs-general-publilshed}.

\subsection{Diamond lanyards and $C_{13}(1,5)$}\label{subsection-diamond-lanyards}

Recall that a \emph{diamond} is a graph with $4$ vertices, exactly one pair of which consists of nonadjacent vertices.  In other words, a diamond is a $K_4$ (complete graph on $4$ vertices) with one edge deleted.  The two vertices not adjacent to one another are the \emph{endpoints} of the diamond.

\begin{Def}\label{def-diamond-lanyard}
An \emph{unclasped diamond lanyard of length} $1$ is a diamond.  Recursively, we define an \emph{unclasped diamond lanyard $U$ of length $\ell+1$} to be the union of an unclasped diamond lanyard $Y$ of length $\ell$ and a diamond $D$, such that $Y$ and $D$ have exactly one endpoint in common.  The \emph{endpoints} of $U$ are the endpoint of $Y$ which is not an endpoint of $D$, and the endpoint of $D$ which is not an endpoint of $Y$.  A \emph{(clasped) diamond lanyard of length} $\ell$ is obtained by adding to an unclasped diamond lanyard $U$ of length $\ell$ an edge between the endpoints of $U$.  We call that edge a \emph{clasp}.
\end{Def}

Def. \ref{def-diamond-lanyard} does not preclude the possibiity of the diamonds in the lanyard having edges in common.  For example, let $X=\text{Cay}(\mathbb{Z}_5,\{\pm 1, \pm 2\})$.  Then $X$ is a complete graph with vertex set $\{0, 1, 2, 3, 4\}$.  We write an edge in $X$ as a string of two vertices.  So $X$ contains as a subgraph a diamond with edges $01, 02, 12, 13, 23$ and endpoints $0$ and $3$.  It also contains as a subgraph a diamond with edges $41, 42, 12, 13, 23$ and endpoints $4$ and $3$.  The edge sets of these two diamonds are non-disjoint.  Taking the union of these two diamonds produces an unclasped diamond lanyard of length two with endpoints $0$ and $4$.  Observing that $04$ is also an edge in $X$, indeed $X$ contains a clasped diamond lanyard as a subgraph.

We sometimes refer to a clapsed diamond lanyard simply as a \emph{diamond lanyard}.  A diamond lanyard  of length $1$ is a $K_4$, and a diamond lanyard  of length $2$ where the two diamonds have disjoint edges is called a \emph{Mosers' spindle} \cite{Soifer}.  Figure \ref{diamond-lanyard} illustrates a diamond lanyard  of length $4$.

\begin{figure}[htp]
\centering
    \includegraphics[scale=.3]{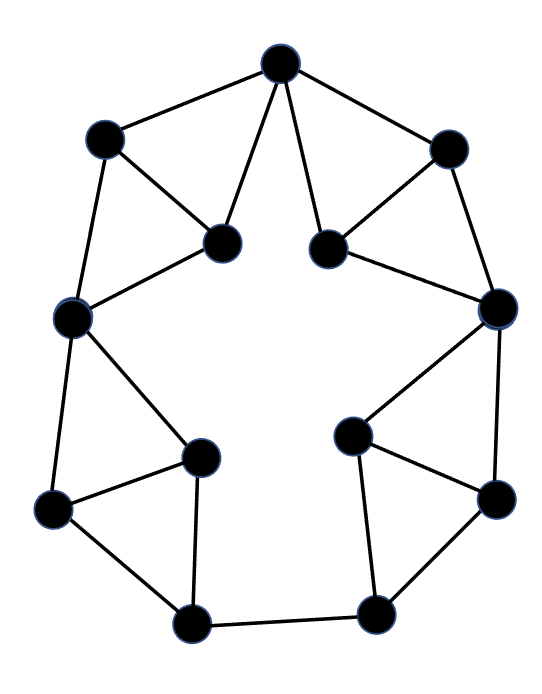}
    \caption{A diamond lanyard of length $4$}
    \label{diamond-lanyard}
\end{figure}

(We would have preferred to have dubbed these ``diamond chain necklaces,'' but the term ``necklace graph'' already has a standard meaning.)

We observe that diamond lanyards are not $3$-colorable.  For suppose we have a proper $3$-coloring.  Note that in any proper $3$-coloring of a diamond, its endpoints must have the same color.  Hence the endpoints of the diamond lanyard must have the same color, but they are adjacent, which is a contradiction.  Thus, we have the following lemma.

\begin{Lem}\label{lemma-chi-of-diamond-lanyard}
Suppose $X$ is a graph containing a diamond lanyard as a subgraph.  Then $\chi(X)\geq 4$.
\end{Lem}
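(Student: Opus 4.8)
The plan is to prove that a graph $X$ containing a diamond lanyard as a subgraph satisfies $\chi(X) \geq 4$, which is equivalent to showing that a diamond lanyard is not properly $3$-colorable. Since the chromatic number of $X$ is at least the chromatic number of any subgraph, it suffices to establish the claim for the lanyard itself. The argument is by contradiction: I would assume that a proper $3$-coloring of the lanyard exists and derive a contradiction from the clasp edge.

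The key structural observation, which drives the entire proof, is the following: \emph{in any proper $3$-coloring of a diamond, its two endpoints must receive the same color}. To see this, recall that a diamond consists of two endpoints $u,v$ (nonadjacent) together with two further vertices $a,b$, where $a$ and $b$ are each adjacent to both $u$ and $v$, and $a$ is adjacent to $b$. Thus $\{u,a,b\}$ and $\{v,a,b\}$ are triangles. In a proper $3$-coloring, $a$ and $b$ get two distinct colors, so the triangle $\{u,a,b\}$ forces $u$ to take the unique remaining third color; likewise $v$ must take that same third color. Hence $u$ and $v$ share a color.

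With this lemma in hand, the natural approach is induction on the length $\ell$ of the unclasped lanyard, proving the stronger statement that in any proper $3$-coloring of an unclasped diamond lanyard, its two endpoints receive the same color. The base case $\ell = 1$ is precisely the diamond observation above. For the inductive step, I would use the recursive definition: an unclasped lanyard $U$ of length $\ell+1$ is the union of an unclasped lanyard $Y$ of length $\ell$ and a diamond $D$ sharing exactly one endpoint. Given a proper $3$-coloring of $U$, its restriction to $Y$ is a proper $3$-coloring, so by the inductive hypothesis the two endpoints of $Y$ share a color; the same reasoning applied to $D$ shows its two endpoints share a color. Since $Y$ and $D$ share one endpoint, all four relevant endpoints carry the same color, and in particular the two endpoints of $U$ (the non-shared ones) agree. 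Finally, adding the clasp edge between the two endpoints of $U$ produces an edge whose two ends are forced to have the same color, contradicting properness; hence no proper $3$-coloring exists, and $\chi(X) \geq 4$.

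The only subtle point, and hence the main thing to be careful about, is that Def.~\ref{def-diamond-lanyard} explicitly allows the constituent diamonds to share edges, not merely vertices, so the lanyard need not be built up from edge-disjoint pieces. However, the endpoint-color argument never relies on edge-disjointness: it only uses that each diamond, as a subgraph, imposes the constraint that its endpoints agree in color, and a proper coloring of the whole restricts to a proper coloring of each diamond regardless of overlaps. Thus the induction goes through without modification, and no genuine obstacle arises beyond keeping the bookkeeping of shared endpoints straight.
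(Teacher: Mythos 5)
Your proposal is correct and follows essentially the same route as the paper, whose (brief) argument also rests on the observation that the endpoints of a diamond must share a color in any proper $3$-coloring, so that the two endpoints of the lanyard agree in color yet are joined by the clasp. You merely make explicit the induction along the chain of diamonds and the harmless nature of shared edges, both of which the paper leaves implicit.
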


We remark that a diamond lanyard of length $\geq 2$, where the defining diamonds have mutually disjoint edge sets, is a unit-distance graph; hence the chromatic number of the plane is at least $4$.  (Indeed, as discussed in the introduction of \cite{Cervantes-Krebs-general-publilshed}, it is now known to be at least $5$.)

One more remark: In his master's thesis \cite{Tim}, Harris introduces the \emph{Diamond is Forever subgroup} of an abelian group $G$ induced by a symmetric subset $S$ of $G$.  This is the subgroup generated by the endpoints of diamonds in the corresponding Cayley graph.  Our proof of Lemma \ref{lemma-chi-of-diamond-lanyard} shows that if some element of the Diamond is Forever subgroup is in $S$, then $\chi(\text{Cay}(G,S))\geq 4$.  Our later uses of Lemma \ref{lemma-chi-of-diamond-lanyard} in this paper can be reformulated in this manner, if desired.

\vspace{.1in}

We now turn our attention to the other object that can stand in the way of $3$-colorability for our graphs.  Let $C_{13}(1,5)$ be the circulant graph $\text{Cay}(\mathbb{Z}_{13},\{\pm 1, \pm 5\})$.  (For more about this notation, see Def. \ref{def-Heuberger-circulant}.)  Heuberger proves the following lemma in \cite{Heuberger} using a ``vertex-chasing'' argument.  Here we prove it by showing that the independence number is $4$.  We include this proof so that it might suggest generalizations.

\begin{Lem}\label{lemma-chi-of-C-13-1-5}
The chromatic number of $C_{13}(1,5)$ is $4$.
\end{Lem}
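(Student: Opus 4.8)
The plan is to establish $\chi(C_{13}(1,5)) = 4$ by showing two inequalities. The lower bound follows from a counting argument: the graph has $13$ vertices and is vertex-transitive (being a circulant Cayley graph), so any independent set of size $\alpha$ yields a proper coloring into at least $\lceil 13/\alpha \rceil$ color classes; more precisely, since the graph is vertex-transitive, the fractional relaxation gives $\chi(C_{13}(1,5)) \geq 13/\alpha(C_{13}(1,5))$. As the text announces, I would prove that the independence number $\alpha = 4$, whence $\chi \geq \lceil 13/4 \rceil = 4$. The upper bound $\chi \leq 4$ requires exhibiting an explicit proper $4$-coloring of the $13$ vertices, which I would simply write down and verify.

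First I would nail down the independence number. An independent set $A \subseteq \mathbb{Z}_{13}$ is one in which no two elements differ by $\pm 1$ or $\pm 5 \pmod{13}$. To see $\alpha \geq 4$, I would exhibit one explicit independent set of size $4$ (for instance, a set avoiding all forbidden differences, which one finds by a short search). The harder direction is $\alpha \leq 4$: I must show no independent set has $5$ or more elements. I would argue this by a pigeonhole or exhaustive-type argument on the structure of forbidden differences. One clean approach is to note that the forbidden difference set $\{\pm 1, \pm 5\}$ generates strong constraints: since $5 \cdot 5 = 25 \equiv -1 \pmod{13}$, the connection set interacts with the multiplicative structure, and one can relabel vertices by a multiplier to recognize additional regularity. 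I would then bound the size of any independent set by partitioning $\mathbb{Z}_{13}$ or by showing that any $5$ residues must contain a forbidden difference.

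For the upper bound, I would present a fixed assignment $c : \mathbb{Z}_{13} \to \{1,2,3,4\}$ and check that adjacent vertices receive distinct colors; since adjacency means the difference lies in $\{\pm 1, \pm 5\}$, this reduces to verifying for each vertex $v$ that $c(v) \neq c(v+1)$ and $c(v) \neq c(v+5)$, a finite check over $13$ vertices. Combining $\chi \geq 4$ from the independence-number bound with the explicit $4$-coloring gives $\chi(C_{13}(1,5)) = 4$.

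I expect the main obstacle to be the sharp upper bound $\alpha \leq 4$ on the independence number, since the lower bound and the explicit coloring are each a single exhibited example plus routine verification. Establishing $\alpha \leq 4$ rigorously (rather than by brute force over all $\binom{13}{5}$ candidate sets) is where a structural insight is needed; I would try to exploit the relation $5^2 \equiv -1 \pmod{13}$, which means multiplication by $5$ is an automorphism of the graph cycling the connection set, to reduce the casework, or else organize the forbidden-difference constraints so that any $5$-element subset is forced to contain a pair at distance $1$ or $5$.
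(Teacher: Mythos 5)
Your skeleton coincides exactly with the paper's: prove $\chi \leq 4$ by exhibiting an explicit $4$-coloring, and prove $\chi \geq 4$ by showing the independence number is at most $4$. (Two minor simplifications: you do not need vertex-transitivity or the fractional relaxation for the lower bound, since $\chi \geq |V|/\alpha$ holds for \emph{every} graph because color classes are independent sets; and the direction $\alpha \geq 4$ is never needed, only $\alpha \leq 4$.) The genuine gap is that the one nontrivial step --- $\alpha \leq 4$ --- is left as a placeholder. You name two candidate strategies (exploit the multiplier automorphism $v \mapsto 5v$, which is indeed a graph automorphism since $5\cdot 5 \equiv -1 \pmod{13}$, or ``organize the forbidden-difference constraints''), but you carry out neither, and you explicitly set aside the brute-force check over $5$-subsets that would otherwise make the plan complete. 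As written, this is a plan for a proof of the key inequality, not a proof of it; the entire mathematical content of the lemma lives in that step.

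For comparison, here is how the paper closes the hole, which is precisely the ``organized forbidden-difference'' pigeonhole you anticipated. Suppose $|A| \geq 5$. First $|A| \leq 6$: seven residues among thirteen arranged cyclically must contain two consecutive ones, which are adjacent. Since $|A| \leq 6 < \tfrac{13}{2}$, some consecutive pair $x, x+1$ lies entirely outside $A$; the remaining eleven residues split into arcs of lengths $6$ and $5$, so one arc contains at least three elements of $A$, and because no two members of $A$ may be consecutive (and the endpoints of the $6$-arc differ by $5$), these three are forced to be $x+y-2,\, x+y,\, x+y+2$ for some $y \in \{4,5,10\}$. Deleting the $\pm 1$ and $\pm 5$ neighbors of these three leaves only $x+y+4$ and $x+y+9$ as possible further members of $A$; but these differ by $5$ and are adjacent, so $|A| \leq 4$, a contradiction. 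Your multiplier idea is a plausible alternative route to reduce the casework, but until it (or the pigeonhole above, or the admittedly legitimate finite exhaustive check on a $13$-vertex graph) is actually executed, the central claim remains unestablished in your write-up.
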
\begin{proof}It is straightforward to find a proper $4$-coloring of $C_{13}(1,5)$.   We now let $A$ be an independent set of vertices in $C_{13}(1,5)$.  We will show that $|A|\leq 4$.  This will prove the lemma.

Suppose that $|A|\geq 5$; we will show that this leads to a contradiction.  First observe that we must have $|A|\leq 6$.  For if $|A|\geq 7$, then $A$ would contain the adjacent vertices $x$ and $x+1$ for some $x\in\mathbb{Z}_{13}$.  Because $|A|\leq 6<\frac{13}{2}$, there must be $x\in\mathbb{Z}_{13}$ such that $x\notin A$ and $x+1\notin A$.  From $|A|\geq 5$ we find that \[\{x+2,x+3,x+4,x+5,x+6,x+7\}\cap A \text{ or } \{x+8,x+9,x+10,x+11,x+12\}\cap A\] must contain at least $3$ elements.  Because $z$ and $z+1$ are adjacent for all $z\in\mathbb{Z}_{13}$, these three elements must be $x+y-2, x+y,$ and $x+y+2$ for some $y\in\{4, 5, 10\}$.  Using the fact that $a\pm 1\notin A$ and $a\pm 5\notin A$ whenever $a\in A$, we see that \[x+y+1, x+y+3, x+y+5, x+y+6, x+y+7, x+y+8, x+y+10, x+y+12\notin A.\]  Because $|A|\geq 5$, we know that $A$ must contain at least two elements other than  $x+y-2, x+y,$ and $x+y+2$, but the only remaining elements in $\mathbb{Z}_{13}$ are $x+y+4$ and $x+y+9$.  However, $x+y+4$ and $x+y+9$ are adjacent and so cannot both be in $A$.\end{proof}

\subsection{The case \tops{$m=2$}{m=2}}\label{subsection-m-2}

In this subsection, we completely determine $\chi(X)$ when $X$ has a $2\times 2$ matrix as an associated Heuberger matrix.  (Note that if the number of columns exceeds the number of rows, we can perform column operations as in \cite[Lemma 2.6]{Cervantes-Krebs-general-publilshed} to get a zero column, and then delete it.  So the results of this section, together with the Tomato Cage Theorem, completely take care of all dimension $2$ cases.)

Suppose $X$ is a standardized abelian Cayley graph with a $2\times 2$ Heuberger matrix.  By performing row and column operations as in \cite[Lemma 2.6]{Cervantes-Krebs-general-publilshed}, one can show that $X$ is isomorphic to a standardized abelian Cayley graph $X'$ with an associated matrix $M_{X'}$ that is lower-triangular, and such that the diagonal entries of $M_{X'}$ are non-negative.  Hence we may restrict our attention to the case where $M_X$ is a $2\times 2$ lower-triangular matrix with nonnegative diagonal entries.

\vspace{.1in}

Next we delve into a class of graphs that play a crucial role in the main theorem for this subsection.

\begin{Def}\label{def-Heuberger-circulant}Let $a, b$, and $n$ be integers with $n\neq 0$ and $\gcd(a,b,n)=1$ and $n\nmid a$ and $n\nmid b$.  We say that $\text{Cay}(\mathbb{Z}_n,\{\pm a,\pm b\})$ is a \emph{Heuberger circulant}, denoted $C_n(a,b)$.\hfill $\square$\end{Def}

The condition $\gcd(a,b,n)=1$ is equivalent to the connectedness of $C_n(a,b)$, and the condition that $n\nmid a$ and $n\nmid b$ is equivalent to the absence of loops.

In \cite{Heuberger}, Heuberger completely determines the chromatic number of all Heuberger circulants, as follows.

\begin{Thm}[{\cite[Theorem 3]{Heuberger}}]\label{theorem-Heubergers}Let $C_n(a,b)$ be a Heuberger circulant.  Then \[\chi(C_n(a,b)) = \begin{cases}

2 & \text{if }a\text{ and }b\text{ are both odd, but }n\text{ is even}
\\

5 & \text{if }n=\pm 5\text{ and }a\equiv \pm 2b\Mod 5\\

4 & \text{if }n=\pm 13,\text{ and }\; a\equiv \pm 5b\Mod {13}\\

4 & \text{if }(i)\; n\neq\pm 5, \text{ and } (ii)\;3\nmid n,\text{ and }(iii)\; a\equiv \pm 2b\Mod {n}\text{ or  }b\equiv \pm 2a\Mod {n}\\

3 & \text{otherwise.}\end{cases}\]\end{Thm}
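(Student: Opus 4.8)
The plan is to determine $\chi(C_n(a,b))$ by climbing from the smallest possible values, exploiting that a Heuberger circulant is connected and, apart from degenerate cases where two of $\pm a,\pm b$ coincide (which are lower-regular and can be checked directly), is $4$-regular. First I would normalize: since $\Cay(\mathbb{Z}_n,\{\pm a,\pm b\})\cong\Cay(\mathbb{Z}_n,\{\pm ua,\pm ub\})$ for every unit $u\in\mathbb{Z}_n^\times$, and since negating $n$, $a$, or $b$ changes nothing, I may rescale freely whenever one of $a,b$ is coprime to $n$. Next I would settle the bipartite case using the standard criterion that a connected Cayley graph is bipartite exactly when some index-$2$ subgroup avoids the connection set. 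The only index-$2$ subgroup of $\mathbb{Z}_n$ is the set of even residues (and it exists only for $n$ even), so $C_n(a,b)$ is bipartite precisely when $n$ is even and $a,b$ are both odd, giving $\chi=2$ via $x\mapsto x\bmod 2$; in all other cases the graph has an odd cycle, so $\chi\geq 3$.

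For the upper bound I would invoke Brooks' theorem. In the $4$-regular case the graph is neither an odd cycle nor complete except when it equals $K_5$, and $\{\pm a,\pm b\}=\mathbb{Z}_5\setminus\{0\}$ holds exactly when $n=\pm 5$ and $a\equiv\pm 2b\pmod 5$, where $\chi=5$. Outside this, Brooks' theorem gives $\chi\leq 4$ (the degenerate lower-regular cases admit $\chi\leq 4$ directly). It then remains to separate $\chi=4$ from $\chi=3$.

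For the two families with $\chi=4$ I would produce the relevant obstruction. If $n=\pm 13$ and $a\equiv\pm 5b\pmod{13}$, then the hypothesis $\gcd(a,b,n)=1$ forces $\gcd(b,13)=1$, so multiplying by $b^{-1}$ yields $C_{13}(a,b)\cong C_{13}(1,5)$, and Lemma~\ref{lemma-chi-of-C-13-1-5} gives $\chi=4$. If instead $a\equiv\pm 2b$ or $b\equiv\pm 2a\pmod n$ with $3\nmid n$ and $n\neq\pm 5$, the same $\gcd$ argument makes the non-duplicated generator a unit, so after rescaling $C_n(a,b)\cong C_n(1,2)$, the square of the $n$-cycle. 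There each set $\{x,x+1,x+2,x+3\}$ induces a diamond with endpoints $x$ and $x+3$, and consecutive diamonds share an endpoint, so chaining $k$ of them gives an unclasped diamond lanyard with endpoints $0$ and $3k$. Since $3\nmid n$ I can choose $3k\equiv n-1$ or $n-2$, so the endpoints are adjacent (via the short way around) while all vertices stay distinct; the lanyard is thus clasped and Lemma~\ref{lemma-chi-of-diamond-lanyard} yields $\chi\geq 4$, which with Brooks gives $\chi=4$.

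The main obstacle is the remaining case: proving $\chi\leq 3$ for every non-bipartite Heuberger circulant that avoids all the obstructions above. A homomorphism onto $K_3=\Cay(\mathbb{Z}_3,\{\pm 1\})$, i.e.\ a ``linear'' coloring $x\mapsto x\bmod 3$, works only when $3\mid n$ and $3\nmid a,b$, and already $C_9(3,1)$ (where $3\mid a$) shows that other parameter families force genuinely non-linear colorings. My plan here is to build explicit periodic proper $3$-colorings of the covering graph $\Cay(\mathbb{Z},\{\pm a,\pm b\})$ whose period divides $n$, so that they descend to $C_n(a,b)$, and to organize the construction by a Euclidean-algorithm-style descent on the residue pair $(a,b)\bmod n$ mirroring the matrix normalization. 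The difficulty is that the admissible pattern depends sensitively on the residues of $a,b,n$, and one must verify that the non-obstruction hypotheses are precisely what guarantee a compatible pattern in every family; carrying out and checking this case analysis is the longest and most delicate part of the argument, and is exactly where Heuberger's detailed work is concentrated.
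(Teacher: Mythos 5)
Your proposal reconstructs only the easy half of this theorem, and the hard half is left as an unexecuted plan. Note first that the paper itself does not prove this statement: it is imported verbatim from \cite{Heuberger} (with the remark that the $n$-cycle case $a\equiv\pm b\Mod{n}$, which Heuberger excluded, is also covered), and the paper only supplies independent arguments for the lower bounds --- the independence-number proof of Lemma \ref{lemma-chi-of-C-13-1-5} and the diamond-lanyard construction for the fourth case, which it explicitly says is ``essentially the reasoning in \cite{Heuberger}.'' Your bipartiteness criterion, your unit-rescaling to $C_n(1,2)$ and $C_{13}(1,5)$ (the $\gcd$ argument making the relevant generator invertible is correct), your Brooks-theorem bound $\chi\leq 4$ outside $K_5$, and your clasped-lanyard construction with $3k\equiv -1$ or $-2\Mod{n}$ all check out, and they match the paper's supplementary material closely. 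So for the cases with answer $2$, $4$, and $5$ your sketch is sound.

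The genuine gap is the final case: showing $\chi\leq 3$ for \emph{every} Heuberger circulant avoiding the listed obstructions. This is the actual content of Heuberger's theorem --- everything you proved so far yields only lower bounds together with $\chi\leq 4$, and nothing rules out further families with $\chi=4$. You acknowledge this yourself, but what you offer in its place (periodic proper $3$-colorings of $\Cay(\mathbb{Z},\{\pm a,\pm b\})$ with period dividing $n$, organized by a Euclidean-style descent on $(a,b)\bmod n$) is a research program, not an argument: you never exhibit the coloring patterns, never specify the descent invariant, and never show that the non-obstruction hypotheses guarantee a compatible pattern in each residue family. Your own example $C_9(3,1)$ already shows the naive homomorphism to $K_3$ fails, and the case analysis needed to replace it is precisely the long technical core of \cite{Heuberger}. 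As written, the proposal proves the theorem only with the ``otherwise'' row replaced by ``$\chi\in\{3,4\}$,'' which is a materially weaker statement; to complete it you would either have to carry out the coloring constructions in full or cite \cite{Heuberger} for that case, as the paper does for the whole theorem.
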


We note that \cite{Heuberger} excludes the case $a\equiv \pm b\Mod n$, in which event $C_n(a,b)$ is an $n$-cycle.  However, the theorem as stated here includes this possibility as well.

Observe that the third case in Theorem \ref{theorem-Heubergers} is precisely Lemma \ref{lemma-chi-of-C-13-1-5}.  Moreover, for the fourth case, Lemma \ref{lemma-chi-of-diamond-lanyard} shows that $\chi(C_n(a,b))\geq 4$, for the following reason.  First suppose that $a\equiv 2b\Mod{n}$.  Observe that for all $x\in\mathbb{Z}_n$, there is a diamond in $C_n(a,b)$ with vertex set $\{x, x+a, x+2a, x+3a\}$, where the endpoints are $x$ and $x+3a$.  Because $a\equiv 2b\Mod{n}$ and $\gcd(a,b,n)=1$, we must have that $\gcd(a,n)=1$.  Moreover, because $3\nmid n$, we have that $\gcd(3a,n)=1$.  Let $m$ be a positive integer such that $3am\equiv 1\Mod{n}$.  Then $C_n(a,b)$ contains a diamond lanyard with vertex set $\{ja\;|\;0\leq j\leq m\}$, so by Lemma \ref{lemma-chi-of-diamond-lanyard}, we have that $\chi(C_n(a,b))\geq 4$.  A similar argument works when $a\equiv -2b\Mod{n}$ or when $b\equiv \pm 2a\Mod{n}$.  This is essentially the reasoning in \cite{Heuberger} for the lower bounds in these cases.  Hence a Heuberger circulant is $4$-colorable unless it is $K_5$; and it is $3$-colorable unless it contains as a subgraph either a diamond lanyard or it equals $C_{13}(1,5)$.  (Recall that $K_5$ contains a diamond lanyard, so we needn't include it separately in the list of obstructions to $3$-colorability.)


Next, we show that when the entries of the first column of $M_X$ are relatively prime, then with some mild additional conditions imposed, $X$ is isomorphic to a Heuberger circulant.

\begin{Lem}\label{lemma-sufficient-condition-to-be-properly-given-circulant}Let $X$ be a standardized abelian Cayley graph with associated Heuberger matrix \[M_X=\begin{pmatrix}
y_{11} & y_{12}\\
y_{21} & y_{22}
\end{pmatrix}.\]Suppose that $X$ does not have loops, that $\det(M_X)\neq 0$, and that $\gcd(y_{11},y_{21})=1$.  Then $X$ is isomorphic to the Heuberger circulant $C_n(a,b)$, where $a=-y_{21}$, $b=y_{11}$, and $n=\det(M_X)$. \end{Lem}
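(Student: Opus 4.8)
The plan is to construct an explicit isomorphism from $X = M_X^{\text{SACG}}$ onto the circulant $C_n(a,b)$ by exhibiting the underlying group $\mathbb{Z}^2/H$ as a cyclic group of the right order and tracking where the generators go. Since $H$ is the column span of $M_X$, the group $\mathbb{Z}^2/H$ has order $|\det(M_X)| = |n|$, and because $\gcd(y_{11}, y_{21}) = 1$ the first column is a primitive vector in $\mathbb{Z}^2$. This primitivity is exactly what I need: I would first extend the first column $(y_{11}, y_{21})^T$ to a basis of $\mathbb{Z}^2$, i.e. find integers $p, q$ with $y_{11} q - y_{21} p = 1$, so that the matrix $\begin{pmatrix} y_{11} & p \\ y_{21} & q \end{pmatrix}$ is in $GL_2(\mathbb{Z})$. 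Applying this unimodular change of basis (a row/column operation as in \cite[Lemma \ref{general-lemma-isomorphisms}]{Cervantes-Krebs-general}) converts the problem into one where the first column becomes $e_1$ and $H$ is spanned by $e_1$ (scaled) together with the image of the second column, from which the cyclic structure $\mathbb{Z}^2/H \cong \mathbb{Z}_n$ is transparent.

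\textbf{Carrying this out,} once $\mathbb{Z}^2/H \cong \mathbb{Z}_n$ is established, I would compute the images of the two standard generators $e_1$ and $e_2$ under the resulting isomorphism. The key point is to verify that $e_1 \mapsto b = y_{11}$ and $e_2 \mapsto a = -y_{21}$ (up to the sign/normalization conventions forced by the definition of a standardized abelian Cayley graph), so that the generating set $S = \{H \pm e_1, H \pm e_2\}$ maps precisely onto $\{\pm a, \pm b\} \subseteq \mathbb{Z}_n$. With $e_2$ in hand, one reads off the coordinates from the cofactor relation: in the basis dual to $\{(y_{11}, y_{21})^T, (p,q)^T\}$, the class of a vector in $\mathbb{Z}^2/H$ is determined by its second coordinate modulo $n$, and a short determinant computation pins the images down to $-y_{21}$ and $y_{11}$. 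I would then confirm the three defining conditions of a Heuberger circulant (Def. \ref{def-Heuberger-circulant}): $\gcd(a, b, n) = 1$ follows since $\gcd(y_{11}, y_{21}) = 1$ already gives $\gcd(a,b) = 1$; and $n \nmid a$, $n \nmid b$ follow from the hypothesis that $X$ has no loops, since a loop would correspond to a generator lying in $H$.

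\textbf{The main obstacle} I anticipate is bookkeeping the sign and normalization conventions so that the claimed values $a = -y_{21}$, $b = y_{11}$, $n = \det(M_X)$ come out exactly as stated rather than up to an automorphism of $\mathbb{Z}_n$ or a relabeling of $\{a,b\}$. Since $\Cay(\mathbb{Z}_n, \{\pm a, \pm b\})$ is unchanged under negating $a$, negating $b$, swapping $a$ and $b$, or multiplying both by a unit of $\mathbb{Z}_n$, the isomorphism class is robust; but producing the specific representatives requires care in choosing the cobasis vector $(p,q)^T$ and in orienting the determinant. I expect the cleanest route is to avoid ad hoc sign-chasing by invoking the isomorphism machinery of \cite[Lemma \ref{general-lemma-isomorphisms}]{Cervantes-Krebs-general} to reduce $M_X$ to the form $\begin{pmatrix} 1 & 0 \\ 0 & n \end{pmatrix}$ or similar, and then reading the generator images off that normalized matrix directly, since that lemma guarantees the graph is preserved throughout.
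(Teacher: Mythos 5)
Your central computation (the ``cofactor'' step in your second paragraph) is sound, but the mechanism you lean on to justify it is not. \cite[Lemma \ref{general-lemma-isomorphisms}]{Cervantes-Krebs-general} permits arbitrary unimodular \emph{column} operations (these leave $H$ unchanged), but among row operations only permutations and sign changes: a general unimodular row operation $U$ replaces $H$ by $UH$ and carries the generator $e_i+H$ to $Ue_i+UH$, which is no longer a standard basis vector, so the standardized Cayley graph is \emph{not} preserved. Your proposed ``cleanest route'' fails concretely: reducing $M_X$ by row and column operations to $\begin{pmatrix} 1 & 0\\ 0 & n \end{pmatrix}$ yields a matrix whose column span contains $e_1$, i.e.\ a graph with loops, whereas $X$ has none by hypothesis --- so that normalized graph cannot be isomorphic to $X$. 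This is precisely why the companion paper files row-addition operations under homomorphisms (\cite[Lemma \ref{general-lemma-homomorphisms}]{Cervantes-Krebs-general}) rather than isomorphisms, and it also undercuts the parenthetical claim in your first paragraph that the change of basis is ``a row/column operation as in'' the isomorphism lemma.

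The fix is already latent in your own second paragraph: use the basis extension only at the level of the group, never as a matrix reduction. With $f_1=(y_{11},y_{21})^t$ and $f_2=(p,q)^t$ satisfying $y_{11}q-y_{21}p=1$, Cramer's rule gives $y_2=\alpha f_1+nf_2$ for the second column $y_2$ of $M_X$, so $\mathbb{Z}^2/H\cong\mathbb{Z}_n$ via the $f_2$-coordinate $v\mapsto \det\begin{pmatrix} y_{11} & v_1\\ y_{21} & v_2\end{pmatrix} \bmod n$; this sends $e_1\mapsto -y_{21}=a$ and $e_2\mapsto y_{11}=b$ with no sign ambiguity at all (you wrote these two images swapped, a harmless slip since the edge set depends only on $\{\pm a,\pm b\}$). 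This determinant map is exactly the homomorphism $\phi\colon e_1\mapsto a,\ e_2\mapsto b$ with which the paper's proof begins; the paper establishes $\ker\phi=H$ by an elementary-number-theory parametrization of the solutions of $ax_1+bx_2\equiv 0\Mod{n}$, whereas your basis-extension argument yields the same kernel identification structurally. Your verification of the Heuberger-circulant conditions --- $\gcd(a,b)=1$ from the hypothesis, and $n\nmid a$, $n\nmid b$ from the absence of loops --- is correct and is left tacit in the paper.
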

\begin{proof}Define a homomorphism $\phi\colon \mathbb{Z}^2\to \mathbb{Z}_n$ by $e_1\mapsto a, e_2\mapsto b$.  Let $y_1$ and $y_2$ be the first and second columns, respectively, of $M_X$.  We must show that $\ker\phi$ equals the $\mathbb{Z}$-span of $y_1$ and $y_2$.  It is straightforward to show that $\phi(y_1)=\phi(y_2)=0$, giving us one inclusion.  We now show the reverse inclusion.  Suppose $\phi((x_1,x_2)^t)=0$.  Then $ax_1+bx_2\equiv 0\Mod n$.  Because $a$ and $b$ are relatively prime, there exist integers $q,r$ such that $aq+br=1$.  Using elementary number theory as in \cite{Pommersheim}, we find that $x_1=\ell nq+kb$ and $x_2=\ell nr-ka$ for some integers $\ell, k$.  Hence after some computations we find that\[\begin{pmatrix}
x_1\\ x_2
\end{pmatrix}=ky_1+\ell n\begin{pmatrix}
q\\ r
\end{pmatrix}=ky_1+\begin{pmatrix}
\ell qy_{11}y_{22}-\ell qy_{21}y_{12}\\
\ell r y_{11}y_{22}-\ell r y_{21}y_{12}
\end{pmatrix}=(k+\ell qy_{22}-\ell ry_{12})y_1+\ell y_2.\qedhere\]\end{proof}

Note that after switching the columns of the matrix, Lemma \ref{lemma-sufficient-condition-to-be-properly-given-circulant} becomes a special case of \cite[Example 2.5]{Cervantes-Krebs-general-publilshed}.  We have included it here separately so as to give a more elementary proof for $2\times 2$ matrices.

We remark that not every graph with an associated $2\times 2$ Heuberger matrix is isomorphic to a circulant.  The graph \(\begin{pmatrix}
4 & 0\\
2 & 4
\end{pmatrix}_X^{\text{SACG}}\), for instance, provides a counterexample.  One computes that $X$ must have order $16$ and so, if circulant, would be of the form $\text{Cay}(\mathbb{Z}_{16},S)$ for some $S$.  Because $X$ is connected and bipartite, and has degree $4$, we see that up to isomorphism, we can assume $S=\{\pm 1, \pm 3\}$ or $S=\{\pm 1,\pm 7\}$.  Direct arguments (for example, counting the number of paths of length $2$ between various vertices) show that neither choice of $S$ produces a circulant graph isomorphic to $X$.

\vspace{.05in}

The following lemma characterizes $2\times 2$ matrices whose associated graphs have loops.  This occurs either when the determinant is nonzero and divides a row, or else when one row is zero and the other has relatively prime entries.

\begin{Lem}\label{lemma-loops-2-by-2}Let $X$ be a standardized abelian Cayley graph with an associated matrix \[M_X=\begin{pmatrix}
    y_{11} & y_{12}\\
    y_{21} & y_{22}
\end{pmatrix} \]  Let $n=\det(M_X)$.  If $n\neq 0$, then $X$ has loops if and only if either (i) $n\mid y_{11}$ and $n\mid y_{12}$ or (ii) $n\mid y_{21}$ and $n\mid y_{22}$.  If $n=0$, then $X$ has loops if and only if either (a) $y_{11}=y_{12}=0$ and $\gcd(y_{21},y_{22})=1$, or (b)~$y_{21}=y_{22}=0$ and $\gcd(y_{11},y_{12})=1$.\end{Lem}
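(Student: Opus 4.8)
The plan is to reduce the statement entirely to lattice membership. By the construction of a standardized abelian Cayley graph, the generators of $X$ are the cosets $H \pm e_1$ and $H \pm e_2$, and a loop occurs exactly when one of these is the identity coset $H$; that is, $X$ has loops if and only if $e_1 \in H$ or $e_2 \in H$, where $H = \langle y_1, y_2\rangle$ is the $\mathbb{Z}$-span of the columns $y_1 = (y_{11}, y_{21})^t$ and $y_2 = (y_{12}, y_{22})^t$. (This is the $m = 2$ instance of the general loop criterion of \cite{Cervantes-Krebs-general}; compare the computation inside Lemma \ref{lemma-m-is-1}. Note in particular that the symmetry of the generating set produces no additional loops, since $e_i \equiv -e_i$ merely means the generator $e_i$ has order $2$.) It therefore suffices to decide separately when $e_1 \in H$ and when $e_2 \in H$.

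Suppose first that $n \neq 0$. Then $y_1$ and $y_2$ are linearly independent over $\mathbb{Q}$, so the equation $c_1 y_1 + c_2 y_2 = e_1$ has a unique rational solution $(c_1, c_2)$, and $e_1 \in H$ if and only if that solution is integral. Cramer's rule gives $c_1 = y_{22}/n$ and $c_2 = -y_{21}/n$, so $e_1 \in H$ if and only if $n \mid y_{21}$ and $n \mid y_{22}$, which is condition (ii). The symmetric computation applied to $c_1 y_1 + c_2 y_2 = e_2$ yields $c_1 = -y_{12}/n$ and $c_2 = y_{11}/n$, so $e_2 \in H$ if and only if $n \mid y_{11}$ and $n \mid y_{12}$, which is condition (i). Taking the disjunction of these two membership criteria gives the $n \neq 0$ half of the lemma.

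Suppose now that $n = 0$. Then $y_1$ and $y_2$ are linearly dependent over $\mathbb{Q}$, so $H$ is contained in a one-dimensional subspace of $\mathbb{Q}^2$ (or is trivial); in particular $e_1$ and $e_2$ cannot both lie in $H$, consistent with conditions (a) and (b) being mutually exclusive. Now $e_1 = (1,0)^t \in H$ forces every element of $H$, hence both columns, to lie on the $x$-axis, i.e.\ $y_{21} = y_{22} = 0$. In that situation $H = \{(k\gcd(y_{11}, y_{12}),\, 0) : k \in \mathbb{Z}\}$, so $e_1 \in H$ if and only if additionally $\gcd(y_{11}, y_{12}) = 1$, which is condition (b). Exchanging the roles of the two coordinates shows that $e_2 \in H$ if and only if $y_{11} = y_{12} = 0$ and $\gcd(y_{21}, y_{22}) = 1$, which is condition (a). Taking the disjunction gives the $n = 0$ half of the lemma.

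The individual computations are routine. The two points demanding care are, first, invoking the correct loop criterion $e_i \in H$ rather than a spurious condition arising from the built-in symmetry of $S$; and second, in the degenerate case $n = 0$, correctly describing the rank-at-most-one lattice $H$ and observing that membership of $e_1$ already pins down the ambient line, which is what forces the corresponding coordinate rows to vanish.
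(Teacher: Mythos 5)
Your proposal is correct. For the nondegenerate half ($n\neq 0$) you and the paper argue identically: both reduce loops to $e_1\in H$ or $e_2\in H$ and test integrality of the unique rational solution of $c_1y_1+c_2y_2=e_j$ --- you via Cramer's rule, the paper via the first and second columns of $M_X^{-1}$, which is the same computation, and both land on (ii) for $e_1$ and (i) for $e_2$. The genuine divergence is in the degenerate half $n=0$, which is where the paper expends most of its effort: it writes $ry_1+sy_2=e_1$, extracts $ry_{21}+sy_{22}=0$, introduces a dependency $\alpha y_1=\beta y_2$ with $\gcd(\alpha,\beta)=1$, and then splits into two cases, killing the case $r\beta+s\alpha=0$ by a divisibility-and-sign argument ($\alpha\mid r$, $r\mid\alpha$, etc., forcing the two displayed relations to contradict each other) and extracting condition (b) from the case $y_{21}=0$. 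You replace that entire case analysis with one geometric observation: since $\det M_X=0$, the lattice $H$ lies in a single rational line (or is trivial), so $e_1\in H$ pins that line down as the $x$-axis, forcing $y_{21}=y_{22}=0$ outright, after which B\'ezout gives $H=\gcd(y_{11},y_{12})\mathbb{Z}\times\{0\}$ and condition (b) follows; this is shorter, symmetric between the two basis vectors, and makes transparent \emph{why} the opposite row must vanish, whereas the paper's route stays inside integer arithmetic and never invokes the ambient $\mathbb{Q}$-span beyond the dependency relation. You are also slightly more careful than the paper at the outset, justifying the criterion ``loops iff $e_1\in H$ or $e_2\in H$'' (noting that the symmetry $H\pm e_i$ of the generating set contributes nothing extra, since $e_i\in H$ iff $-e_i\in H$), where the paper simply asserts it.
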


\begin{proof} First suppose $n\neq 0$.  We know that $X$ has loops if and only if $e_1$ or $e_2$ is in the $\mathbb{Z}$-span $H$ of the columns of $M_X$.  We have $e_1\in H$ if and only if $M_X^{-1}e_1\in\mathbb{Z}^2$.  But \[M_X^{-1}e_1=\frac{1}{n}\begin{pmatrix}
    y_{22} & -y_{21}\\
    -y_{12} & y_{11}
\end{pmatrix}e_1=(y_{22}/n\;\;\;\;-y_{21}/n)^t.\]  Similarly for $e_2$.

Now suppose $n=0$.  If (a) holds, then $e_2\in H$.  If (b) holds, then $e_1\in H$.   Conversely, suppose that $X$ has loops, so that $e_1\in H$ or $e_2\in H$.  First suppose $e_1\in H$.  Then there exist $r,s\in\mathbb{Z}$ such that \begin{equation}\label{equation-e1-in-H}r\begin{pmatrix}
    y_{11}\\
    y_{21}
\end{pmatrix}+s\begin{pmatrix}
    y_{12}\\
    y_{22}
\end{pmatrix}=e_1,\end{equation} so $ry_{21}+sy_{22}=0$.  Because $n=0$, the left column and right columns of $M_X$ are linearly dependent over $\mathbb{Q}$.  Then there exist $\alpha, \beta\in\mathbb{Z}$, not both $0$, such that \begin{equation}\label{equation-cols-lin-dep}\alpha\begin{pmatrix}
    y_{11}\\
    y_{21}
\end{pmatrix}=\beta\begin{pmatrix}
    y_{12}\\
    y_{22}
\end{pmatrix}.\end{equation}  We can assume $\gcd(\alpha,\beta)=1$, for if not, replace them with $\alpha/\gcd(\alpha,\beta)$ and $\beta/\gcd(\alpha,\beta)$, respectively.

Then $r\beta y_{21}+s\beta y_{22}=0$, so $r\beta y_{21}+s\alpha y_{21}=0$, so $r\beta +s\alpha=0$ or $y_{21}=0$.

Case 1: Suppose $r\beta +s\alpha=0$.  From (\ref{equation-e1-in-H}) we also have $ry_{11}+sy_{12}=1$, so $\gcd(r,s)=1$.  We have $r\beta=-s\alpha$, so $\alpha\mid r$ and $\beta\mid s$.  Also $r\mid \alpha$ and $s\mid \beta$.  So either $\alpha=r$ and $\beta=-s$, or $\alpha=-r$ and $\beta=s$.  Either way, (\ref{equation-e1-in-H}) and (\ref{equation-cols-lin-dep}) now contradict one another.

Case 2: Suppose $y_{21}=0$.  Then $0=\alpha y_{21}=\beta y_{22}$.  If $y_{22}=0$, then using (\ref{equation-e1-in-H}), we find that (b) holds.  If not, then $\beta=0$, so $\alpha\neq 0$, so $y_{11}=y_{21}=0$.  Equation (\ref{equation-e1-in-H}) then implies that (b) holds.

A similar argument shows that if $e_2\in H$, then (a) must hold.\end{proof}

We now have all the pieces in place needed to find $\chi(X)$ whenever $X$ has an associated $2\times 2$ matrix.

\begin{Thm}\label{theorem-m-equals-2}Let $X$ be a standardized abelian Cayley graph defined by \[\begin{pmatrix}
y_{11} & 0\\
y_{21} & y_{22}
\end{pmatrix}^{\emph{SACG}}_X.\]Suppose that $y_{11}\geq 0$ and $y_{22}\geq 0$. Let $d=\gcd(y_{11},y_{21})$ and $e=\gcd(y_{11},y_{21},y_{22})$.  Then:\begin{enumerate}

    \item\label{statement-m-2-loops} If either (i) $y_{22}=1$ or (ii) $y_{11}=1$ and $y_{22}|y_{21}$ or (iii) $y_{11}=0$ and $\gcd(y_{21},y_{22})=1$, then $X$ has loops and is not properly colorable.
    
    \item\label{statement-m-2-bipartite} If both $y_{11}+y_{21}$ and $y_{22}$ are even, then $\chi(X)=2$.
    
    \item\label{statement-m-2-top-row-or-second-column-zero-or-e-greater-than-1-or-y22-divides-y11} If (i) neither of the conditions in the previous statements holds, and (ii) $y_{11}=0$ or $y_{22}=0$ or $e>1$ or $y_{22}|y_{21}$, then $\chi(X)=3$.

    \item\label{statement-m-2-Heuberger} If none of the conditions in the previous statements hold, take $q\in\mathbb{Z}$ such that $\text{gcd}(y_{11},y_{21}+qy_{22})=1$.  (Such a $q$ necessarily exists, as we can let $q$ be the product of all primes $p$ such that $p|y_{11}$ but $p\nmid d$.  Here we adopt the convention that if there are no such primes, let $q=1$.)  Then $\chi(X)=\chi(C_n(a,b))$, where $a=-y_{21}-qy_{22}$, $b=y_{11}$, and $n=y_{11}y_{22}$.
\end{enumerate}\end{Thm}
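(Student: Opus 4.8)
The plan is to treat the four conclusions in turn, reducing each to a result already in hand: Lemma~\ref{lemma-loops-2-by-2} for the loop criterion, the bipartiteness lemma of \cite{Cervantes-Krebs-general} for statement~\ref{statement-m-2-bipartite}, graph homomorphisms onto odd cycles (plus the Tomato Cage Theorem) for statement~\ref{statement-m-2-top-row-or-second-column-zero-or-e-greater-than-1-or-y22-divides-y11}, and Lemma~\ref{lemma-sufficient-condition-to-be-properly-given-circulant} together with Heuberger's Theorem~\ref{theorem-Heubergers} for statement~\ref{statement-m-2-Heuberger}. Throughout I use that $M_X$ is lower triangular, so $n=\det(M_X)=y_{11}y_{22}$, and that a standardized abelian Cayley graph is connected (its generators are the images of $e_1,e_2$, which generate $\mathbb{Z}^2/H$), so that a loopless non-bipartite such graph has chromatic number at least $3$.

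For statement~\ref{statement-m-2-loops} I would specialize Lemma~\ref{lemma-loops-2-by-2} to the lower-triangular case. When $n\neq 0$ (equivalently $y_{11},y_{22}>0$), condition (i) of that lemma reads $n\mid y_{11}$ since $n\mid y_{12}=0$ holds automatically, i.e.\ $y_{22}=1$, matching (i) here; condition (ii) reads $n\mid y_{21}$ and $n\mid y_{22}$, i.e.\ $y_{11}=1$ and $y_{22}\mid y_{21}$, matching (ii). When $n=0$, conditions (a) and (b) of the lemma become exactly (iii) and the degenerate subcase $y_{11}=1,\,y_{21}=y_{22}=0$; the latter is already subsumed in (ii), since for $y_{22}=0$ the relation $y_{22}\mid y_{21}$ forces $y_{21}=0$. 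For statement~\ref{statement-m-2-bipartite}, the unique homomorphism $\mathbb{Z}^2\to\mathbb{Z}_2$ sending $e_1,e_2\mapsto 1$ descends to $\mathbb{Z}^2/H$ precisely when both columns vanish mod $2$, i.e.\ when $y_{11}+y_{21}$ and $y_{22}$ are even; by \cite[Lemma~\ref{general-lemma-bipartite}]{Cervantes-Krebs-general} this is exactly bipartiteness. A short parity check shows the hypotheses of statements~\ref{statement-m-2-loops} and~\ref{statement-m-2-bipartite} are incompatible, so in the bipartite case $X$ is loopless and $\chi(X)=2$.

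For statement~\ref{statement-m-2-top-row-or-second-column-zero-or-e-greater-than-1-or-y22-divides-y11}, the lower bound $\chi(X)\geq 3$ is immediate from looplessness and non-bipartiteness. For the upper bound I would produce, in each listed subcase, a homomorphism from $X$ onto a graph of chromatic number $\leq 3$ and pull back a colouring, as in the framework of \cite{Cervantes-Krebs-general}. If $e>1$, the map $e_1,e_2\mapsto 1\in\mathbb{Z}_e$ kills both columns (all entries being divisible by $e$), hence descends to a homomorphism onto $C_e=\Cay(\mathbb{Z}_e,\{\pm 1\})$; non-bipartiteness forces $e$ odd, so $C_e$ is an odd cycle with $\chi=3$. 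The case $y_{11}=0$ is handled identically with $\mathbb{Z}_e$ replaced by $\mathbb{Z}_g$, where $g=\gcd(y_{21},y_{22})$ is odd by non-bipartiteness. If $y_{22}\mid y_{21}$, the column operation $C_1\mapsto C_1-(y_{21}/y_{22})C_2$ of \cite[Lemma~\ref{general-lemma-isomorphisms}]{Cervantes-Krebs-general} makes $M_X$ diagonal without changing $X$, so $X$ is a Cartesian product of two cycles, whose chromatic number is the maximum of the two and hence at most $3$. Finally, if $y_{22}=0$ I would delete the zero column and invoke the Tomato Cage Theorem for the resulting rank-$1$ matrix.

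For statement~\ref{statement-m-2-Heuberger} we are in the complementary case $y_{11},y_{22}\neq 0$, $e=1$, $y_{22}\nmid y_{21}$, and no loops. I would first check the prescribed $q$: for each prime $p\mid y_{11}$, either $p\nmid d$, whence $p\mid q$ and $p\nmid y_{21}$ give $y_{21}+qy_{22}\equiv y_{21}\not\equiv 0\Mod p$; or $p\mid d$, whence $e=1$ forces $p\nmid y_{22}$ while $p\nmid q$, giving $y_{21}+qy_{22}\equiv qy_{22}\not\equiv 0\Mod p$. Thus $\gcd(y_{11},y_{21}+qy_{22})=1$. The column operation $C_1\mapsto C_1+qC_2$ then puts $M_X$ into a lower-triangular matrix with coprime first column, nonzero determinant $n$, and no loops, so Lemma~\ref{lemma-sufficient-condition-to-be-properly-given-circulant} identifies $X$ with the Heuberger circulant $C_n(a,b)$ for $a=-y_{21}-qy_{22}$, $b=y_{11}$, $n=y_{11}y_{22}$, and its chromatic number is read off from Theorem~\ref{theorem-Heubergers}. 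I expect the main obstacle to be the bookkeeping in statement~\ref{statement-m-2-top-row-or-second-column-zero-or-e-greater-than-1-or-y22-divides-y11}: confirming that the four subcases, together with the exclusions inherited from the earlier statements, are exhaustive (overlaps being harmless) and that degenerate small cycle lengths such as $C_2$ cause no difficulty — whereas the existence-of-$q$ computation, though the most intricate single calculation, is routine once $e=1$ is brought to bear.
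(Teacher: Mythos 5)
Your proposal is correct and takes essentially the same route as the paper's proof: statement (\ref{statement-m-2-loops}) by specializing Lemma \ref{lemma-loops-2-by-2} (including its converse), statement (\ref{statement-m-2-bipartite}) from the bipartiteness lemma of the companion article, statement (\ref{statement-m-2-top-row-or-second-column-zero-or-e-greater-than-1-or-y22-divides-y11}) by reducing each subcase to a $3$-chromatic target, and statement (\ref{statement-m-2-Heuberger}) via the column operation followed by Lemma \ref{lemma-sufficient-condition-to-be-properly-given-circulant} and Theorem \ref{theorem-Heubergers}. The only differences are cosmetic: in statement (\ref{statement-m-2-top-row-or-second-column-zero-or-e-greater-than-1-or-y22-divides-y11}) you build the odd-cycle homomorphisms explicitly where the paper cites the corresponding lemmas of \cite{Cervantes-Krebs-general}, and you spell out the verification that the prescribed $q$ works, which the paper only asserts parenthetically.
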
\begin{proof} Lemma \ref{lemma-loops-2-by-2} implies both Statement (\ref{statement-m-2-loops}) and its converse.  Statement (\ref{statement-m-2-bipartite}) follows from \cite[Lemma 2.11]{Cervantes-Krebs-general-publilshed}.

We now prove Statement (\ref{statement-m-2-top-row-or-second-column-zero-or-e-greater-than-1-or-y22-divides-y11}).  Assume that conditions (i) and (ii) in that statement both hold.  Condition (i) implies that $X$ does not have loops, and that $X$ is not bipartite.  If $y_{11}=0$, then we can delete the top row as per \cite[Lemma 2.8]{Cervantes-Krebs-general-publilshed} without affecting the chromatic number.  Lemma \ref{lemma-m-is-1} now gives us $\chi(X)=3$.  If $y_{22}=0$, then we can delete the second column as per \cite[Lemma 2.6(4)]{Cervantes-Krebs-general-publilshed} without changing $X$.  We then find that $\chi(X)=3$, by \cite[Theorem 2.15]{Cervantes-Krebs-general-publilshed}.  If $e>1$, then $\chi(X)=3$, by \cite[Lemma 2.12]{Cervantes-Krebs-general-publilshed}.  If $y_{22}|y_{21}$, then after performing a column sum as in \cite[Lemma 2.6(4)]{Cervantes-Krebs-general-publilshed} to eliminate $y_{21}$, by \cite[Lemma 2.7]{Cervantes-Krebs-general-publilshed}, we have that $\chi(X)=3$.

Finally, we prove Statement (\ref{statement-m-2-Heuberger}).  By \cite[Lemma 2.6(3)]{Cervantes-Krebs-general-publilshed}, we have that \[\begin{pmatrix}
y_{11} & 0\\
y_{21} & y_{22}
\end{pmatrix}^{\text{SACG}}_X=\begin{pmatrix}
y_{11} & 0\\
y_{21}+qy_{22} & y_{22}
\end{pmatrix}^{\text{SACG}}_X.\]The result follows from Lemma \ref{lemma-sufficient-condition-to-be-properly-given-circulant}.\end{proof}


\vspace{.1in}It follows from Thm. \ref{theorem-m-equals-2} and our previous observations about Heuberger circulants that for a standardized abelian Cayley graph $X$ with an associated $2\times 2$ Heuberger matrix, if $X$ does not have loops, then $X$ is $4$-colorable unless it contains $K_5$ as a subgraph; and it is $3$-colorable unless it contains as a subgraph either a diamond lanyard or $C_{13}(1,5)$.  We will see in \S\ref{subsection-m-3} that this statement holds for the $3\times 2$ case as well.

\vspace{.1in}We remark that an alternate approach to a $2\times 2$ integer matrix is possible.  We now briefly sketch this method.  First we divvy into cases according to whether (i) the matrix is singular, or (ii) the matrix is nonsingular and has a primitive row, i.e., a row whose entries are relatively prime, or (iii) the matrix is nonsingular and has no primitive rows.  In all three cases, we perform column operations as per the steps in the Euclidean algorithm applied to the entries of the top row.  In Case (i) we thereby obtain a zero column, which can be deleted, whereupon we apply the Tomato Cage theorem.  In Case (ii) we obtain a matrix whose top row is $(1\;0)$; the corresponding graph is therefore circulant, so Theorem \ref{theorem-Heubergers} gives us its chromatic number.  In Case (iii) we obtain a homomorphism by appending the columns $d_1e_1$ and $d_2e_2$, where $d_1$ and $d_2$ are the gcds of the entries in the first and second rows, respectively.  We may then delete the original columns, as they lie in the $\mathbb{Z}$-span of $d_1e_1$ and $d_2e_2$.  This is a diagonal matrix, so the chromatic number is the maximum of the chromatic numbers of the graphs associated to $(d_1)$ and $(d_2)$.  As those are both cycle graphs, it follows that the original graph is $3$-colorable.

\vspace{.1in}We now discuss a useful corollary that follows from Theorem \ref{theorem-m-equals-2}.  When $M_X$ is a $2\times 2$ integer matrix, we perform row and column operations to create a $2\times 2$ lower triangular matrix $M_{X'}$ for which $X'$ is isomorphic to $X$.  Observing the effect of these operations on the determinant, we find that $|\det\;M_X|=|\det\;M_{X'}|$.  Note that in Theorem \ref{theorem-m-equals-2}, whenever $X$ has no loops and $\chi(X)>3$, we have that $|\det\;M_X|$ is not divisible by $3$.  Thus we have the following corollary, which provides an easily checked sufficient condition for $3$-colorability.

\begin{Cor}\label{corollary-det-divis-by-3}Let $X$ be a standardized abelian Cayley graph with an associated $2\times 2$ matrix $M_X$.  If $X$ has no loops and $3\mid \det\;M_X$, then $\chi(X)\leq 3$.\end{Cor}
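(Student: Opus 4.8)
The plan is to reduce to the standardized lower-triangular form covered by Theorem \ref{theorem-m-equals-2}, argue by contradiction that $\chi(X)>3$ is impossible when $3\mid\det M_X$, and isolate the only case of that theorem that can produce $\chi(X)>3$. First I would invoke the reduction already described immediately before the corollary: performing row and column operations as in \cite[Lemma \ref{general-lemma-isomorphisms}]{Cervantes-Krebs-general} produces a lower-triangular matrix $M_{X'}$ with nonnegative diagonal entries, where $X'$ is isomorphic to $X$ and $|\det M_X|=|\det M_{X'}|$. Since isomorphic graphs have the same chromatic number and the same presence or absence of loops, $X'$ has no loops, $\chi(X')=\chi(X)$, and $3\mid\det M_{X'}$. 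Thus it suffices to prove the statement for $X'$, which is of the exact form treated in Theorem \ref{theorem-m-equals-2}.

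Next I would suppose, toward a contradiction, that $\chi(X')>3$ and run through the cases of Theorem \ref{theorem-m-equals-2}. Because $X'$ has no loops, Statement (\ref{statement-m-2-loops}) does not apply. Statement (\ref{statement-m-2-bipartite}) gives $\chi=2$ and Statement (\ref{statement-m-2-top-row-or-second-column-zero-or-e-greater-than-1-or-y22-divides-y11}) gives $\chi=3$, so neither is compatible with $\chi(X')>3$. Hence we must be in Statement (\ref{statement-m-2-Heuberger}), where $\chi(X')=\chi(C_n(a,b))$ with $n=y_{11}y_{22}$. Since $M_{X'}$ is lower-triangular, $\det M_{X'}=y_{11}y_{22}=n$, so our hypothesis becomes $3\mid n$, and we have $\chi(C_n(a,b))>3$.

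The crux is then a direct inspection of Heuberger's Theorem \ref{theorem-Heubergers} to see that $\chi(C_n(a,b))>3$ forces $3\nmid n$. The chromatic number of a Heuberger circulant exceeds $3$ only in the three listed cases: the value $5$ requires $n=\pm5$; the first value-$4$ case requires $n=\pm13$; and the second value-$4$ case includes the explicit hypothesis $3\nmid n$. In each of these $3\nmid n$, which contradicts $3\mid n$. Therefore $\chi(X')\leq 3$, and hence $\chi(X)\leq 3$, as claimed.

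I do not expect any genuine obstacle here: the argument is a bookkeeping pass over the cases of Theorem \ref{theorem-m-equals-2} and Theorem \ref{theorem-Heubergers}, and the only substantive point is the observation that the determinant of the triangular matrix equals the modulus $n$ of the associated Heuberger circulant, so that divisibility of $\det M_X$ by $3$ transfers directly to $n$. The mild care required is simply to confirm that the column operation used in Statement (\ref{statement-m-2-Heuberger}) to pass from $y_{21}$ to $y_{21}+qy_{22}$ does not alter the determinant, which is clear since adding a multiple of one column to another leaves $\det M_{X'}=y_{11}y_{22}$ unchanged.
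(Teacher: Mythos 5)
Your proposal is correct and follows essentially the same route as the paper: the paper's (terse) justification is precisely the reduction to lower-triangular form with $|\det M_X|=|\det M_{X'}|$, followed by the observation that in Theorem \ref{theorem-m-equals-2} the only loop-free case with $\chi(X)>3$ is Statement (\ref{statement-m-2-Heuberger}), where $n=y_{11}y_{22}=\det M_{X'}$ and Theorem \ref{theorem-Heubergers} forces $3\nmid n$ (since $n=\pm 5$, $n=\pm 13$, or $3\nmid n$ explicitly). Your write-up simply makes explicit the case bookkeeping and the determinant-preservation check that the paper leaves implicit.
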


We note that Cor. \ref{corollary-det-divis-by-3} fails in general for larger matrices.  For example, we have by Thm. \ref{theorem-m-equals-2}, Thm. \ref{theorem-Heubergers}, \cite[Lemma 2.7]{Cervantes-Krebs-general-publilshed}, and Example \cite[2.1]{Cervantes-Krebs-general-publilshed} that \[\chi\left(\begin{pmatrix}
1 & 0 & 0\\
-2 & 5 & 0\\
0 & 0 & 3
\end{pmatrix}^{\text{SACG}}_X\right)=5.\]

However, observe that not every $2\times 2$ minor of the matrix above has determinant divisible by $3$.  As mentioned in the introduction, we conjecture that if this stronger condition holds, and $X$ does not have loops, then $X$ is $3$-colorable.  That stronger condition is equivalent to the statement that the rank over $\mathbb{Z}_3$ of the reduction of the matrix modulo $3$ (i.e., the matrix obtained by reducing each of its entries modulo $3$) is $\leq 1$.




\subsection{Modified Hermite normal form}\label{subsection-restricted-Hermite-normal-form}

In Subsection \ref{subsection-m-2}, we saw that it was useful to deal only with $2\times 2$ Heuberger matrices in a certain convenient format.  The same goes for $3\times 2$ Heuberger matrices.  The crux of this idea is drawn from \cite{Heuberger}, where Hermite normal form is used.  For $3\times 2$ matrices, we refine the requirements slightly for our purposes, so as to further reduce the number of exceptional cases.  The purpose of the present subsection is to define this ``modified Hermite normal form'' for $3\times 2$ matrices and to show that with very few exceptions every standardized abelian Cayley graph with an associated $3\times 2$ Heuberger matrix is isomorphic to one with a matrix in this form.  We do not attempt here to generalize these definitions to matrices of arbitrary size, as we do not know yet what restrictions will prove to be most useful when the rank or dimension is larger.

\begin{Def}\label{def-modified-Hermite-normal-form}Let  $$M=\begin{pmatrix}
y_{11} & y_{12}\\
y_{21} & y_{22}\\
y_{31} & y_{32}
\end{pmatrix}$$ be a $3\times 2$ matrix with integer entries such that no row of $M$ has all zero entries.  We say $M$ is in \emph{modified Hermite normal form} if the following conditions hold:\begin{enumerate}
    \item $y_{11}>0$, and 

\item $y_{12}=0$, and 

\item $y_{11}y_{22}\equiv y_{11}y_{32}\Mod{3}$, and

\item  $y_{22}\leq y_{32}$, and 

\item  $|y_{22}|\leq |y_{32}|$, and 

\item Either (i) $y_{22}=0$ and $-\frac12 |y_{32}|\leq y_{31}\leq 0$, or else (ii) $-\frac12 |y_{22}|\leq y_{21}\leq 0$. 

\end{enumerate}\end{Def}


There are some departures here from the usual Hermite normal form.  For example, $|y_{21}|$ cannot be more than half of $y_{22}$, a more stringent requirement than being less than $y_{22}$, as in ordinary Hermite normal form.  As we shall see, we can impose this narrower condition because we have both row and column operations at our disposal, not just column operations.  Moreover, this form may be the transpose of what some readers are accustomed to, but we have adopted this convention so as to be consistent with \cite{Heuberger}.  The third condition has no analogue with Hermite normal form but will turn out to be rather useful in the succeeding subsection.

We next show that every standardized abelian Cayley graph with a $3\times 2$ Heuberger matrix of rank 2 without zero rows is isomorphic to one with a Heuberger matrix in modified Hermite normal form.

\begin{Lem}\label{lemma-modified-Hermite-normal-form}Let $X$ be a standardized abelian Cayley graph with a $3\times 2$ Heuberger matrix $M_X$.  Suppose that $M_X$ has no zero rows, and that the columns of $M_X$ are linearly independent over $\mathbb{Q}$.  Then $X$ is isomorphic to a standardized abelian Cayley graph $X'$ with a $3\times 2$ Heuberger matrix $M_{X'}$ in modified Hermite normal form.\end{Lem}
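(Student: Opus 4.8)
The plan is to realize the modified Hermite normal form by a sequence of the graph isomorphisms of \cite[Lemma~\ref{general-lemma-isomorphisms}]{Cervantes-Krebs-general}. The operations at my disposal are the column operations (adding an integer multiple of one column to the other, swapping the two columns, and negating a column) together with the row operations coming from signed permutations of the standard basis (permuting rows and negating rows). All of these preserve the isomorphism type of $X$, and, since each is realized by multiplying $M_X$ on the left or right by an invertible integer matrix, each preserves the hypotheses that the columns are linearly independent over $\mathbb{Q}$ and that no row is zero. Throughout I would keep track of the three $2\times 2$ minors of the matrix, since these are what control the mod-$3$ condition.

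The first and most delicate step is to arrange condition~3, and this must be done by choosing the right row to serve as the top row \emph{before} triangularizing. Reducing each $2\times 2$ minor modulo $3$ and then up to sign leaves only two possibilities, $\equiv 0$ or $\equiv\pm 1 \Mod 3$; hence by the pigeonhole principle two of the three minors of $M_X$ lie in the same class. Any two of the three minors are indexed by two distinct pairs of rows, which share exactly one common row, and I would take that shared row to be the top row (via a row swap). (I also note that at most one minor can vanish, since two vanishing minors would force all three rows to be proportional and hence $M_X$ to have rank $1$.) I would then run the Euclidean algorithm on the top row using column operations, reducing it to $(\gcd,\,0)$; since no row is zero this gcd is positive, so after possibly negating the first column I obtain $y_{11}>0$ and $y_{12}=0$, giving conditions 1 and 2. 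At this stage $m_{12}=y_{11}y_{22}$ and $m_{13}=y_{11}y_{32}$ are, up to an overall sign from the column operations, the two minors I selected, so they are congruent modulo $3$ up to sign; negating the second row (which negates $m_{12}$ but fixes $m_{13}$) if necessary then makes $y_{11}y_{22}\equiv y_{11}y_{32}\Mod 3$, establishing condition 3.

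It remains to secure conditions 4, 5, and 6 by operations that do not disturb conditions 1--3. The operations that preserve $y_{11}>0$, $y_{12}=0$, and the congruence of $m_{12}$ and $m_{13}$ modulo $3$ are exactly: swapping rows 2 and 3 (which swaps $m_{12}$ and $m_{13}$), negating column 2 (which negates both minors), adding a multiple of column 2 to column 1 (which changes neither minor), and the combined move ``negate rows 2 and 3, then negate column 2,'' whose net effect is simply $y_{21}\mapsto -y_{21}$, $y_{31}\mapsto -y_{31}$ with the second column unchanged. First I would swap rows 2 and 3 if needed so that $|y_{22}|\le|y_{32}|$ (condition 5; here $y_{32}\neq 0$, lest column 2 vanish and contradict linear independence), and then negate column 2 if needed so that $y_{32}>0$, whereupon $y_{22}\le|y_{22}|\le|y_{32}|=y_{32}$ yields condition 4.

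Finally, for condition 6, if $y_{22}\neq 0$ I would use the column operation adding a multiple of column 2 to column 1 to reduce $y_{21}$ into the window $(-\tfrac12|y_{22}|,\tfrac12|y_{22}|]$, so that $|y_{21}|\le\tfrac12|y_{22}|$; if the resulting $y_{21}$ is positive, the combined sign-flip above replaces it by $-y_{21}\in[-\tfrac12|y_{22}|,0]$, giving condition 6(ii). The availability of the row operations is precisely what lets me demand this ``half'' bound rather than the full residue window of ordinary Hermite form. If instead $y_{22}=0$, then $m_{12}=0$, and condition 3 (already arranged) reads $3\mid y_{11}y_{32}$, which is consistent; the same column operation now leaves $y_{21}$ fixed while shrinking $y_{31}$ to $|y_{31}|\le\tfrac12|y_{32}|$, and the sign-flip, if needed, places $y_{31}\in[-\tfrac12|y_{32}|,0]$, giving condition 6(i). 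The main obstacle is the very first step: seeing that condition 3 is always attainable requires the mod-$3$ pigeonhole observation on the minors and the correct choice of pivot row, after which one must check---as above---that none of the later normalizations can destroy it, and that the degenerate case $y_{22}=0$ remains consistent with the congruence.
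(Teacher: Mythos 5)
Your proposal is correct and follows essentially the same constructive algorithm as the paper's proof: the mod-$3$ pigeonhole on the three $2\times 2$ minors to secure condition 3 (the paper's Step Zero), Euclidean column operations to triangularize with $y_{11}>0$ (using the no-zero-row hypothesis), row swaps and a column negation for conditions 4--5, and division into a half-length window --- made possible by having row as well as column sign changes, and applied to the third row when $y_{22}=0$ via linear independence --- for condition 6. The only differences are cosmetic: you fix the sign in condition 3 after triangularizing (by negating row 2) rather than before, and your composite sign-flip move (negate rows 2 and 3, then column 2) differs from the paper's (add a column, negate the first column, negate the first row), both being compositions of the allowed operations with the same net effect.
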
\begin{proof}The proof is constructive.  We give an explicit algorithm for row and column operations to perform on $M_X$, as per \cite[Lemma 2.6]{Cervantes-Krebs-general-publilshed}, so as to result in the desired matrix $M_{X'}$.


\begin{enumerate}

\item[Step Zero:] Let $\alpha, \beta$, and $\gamma$ be the determinants of the $2\times 2$ minors of $M_X$.  Two of $\alpha, \beta, \gamma$ must be congruent to each other or negatives of each other modulo $3$.  Using this fact, we can permute rows and/or multiply rows by $-1$ as needed so that the determinant of the submatrix formed by the top two rows is congruent modulo $3$ to the submatrix formed by the first and third rows.  This property will be preserved by all subsequent steps and will eventually lead to satisfaction of the third condition in Def. \ref{def-modified-Hermite-normal-form}.  Let $M_{X_{1}}$ be the resulting matrix.

\item[Step One:] If the first entry of any column of $M_{X_{1}}$ is negative, multiply that column (or those columns) by $-1$.  Let $M_{X_2}$ be the resulting matrix.  The top row of $M_{X_2}$ has no negative entries.

\item[Step Two:] If the first entry of the first column of $M_{X_2}$ is $0$, then permute the two columns; otherwise, do nothing.  Let $M_{X_3}$ be the resulting matrix.  The first entry of the first column of $M_{X_3}$ is strictly positive.  (Here is where we use the assumption that $M_X$ has no zero rows.)  If the first entry of the second column is $0$, then let $M_{X_4}=M_{X_3}$ and skip to Step Four.

\item[Step Three:] Both entries of the top row of $M_{X_3}$ are strictly positive.  Let $e$ be the greatest common divisor of these two entries.  Repeatedly applying \cite[Lemma 2.6(3)]{Cervantes-Krebs-general-publilshed}, we perform column operations that effectuate the Euclidean algorithm on the entries in the top row of $M_{X_3}$, so that the top row of the resulting matrix has two entries, one of which is $e$, the other $0$.  If the first entry of the first column is $0$, then permute the two columns.  Let $M_{X_4}$ be the resulting matrix.  The top row of $M_{X_4}$ has a strictly positive first entry, and $0$ for its second entry.  The matrix $M_{X_4}$ now meets the first three conditions in the definition of modified Hermite normal form, and these will be preserved by all subsequent steps.

\item[Step Four:] Let $z$ and $w$ be the $(3,1)$ and $(3,2)$ entries of $M_{X_4}$, respectively.  If $z\leq w$ and $|z|\leq |w|$, then do nothing.  If $z>w$ and $|z|\leq |w|$, then multiply the second column by $-1$.  If $z\leq w$ and $|z|> |w|$, then switch the bottom two rows and multiply the second column by $-1$.  If $z> w$ and $|z|> |w|$, then switch the bottom two rows.  Whatever action was taken, let $M_{X_{5}}$ be the resulting matrix.  The first five conditions in Def. \ref{def-modified-Hermite-normal-form} are now (and will continue to be) met.



\item[Step Five:] Let $a$ and $b$ be the $(2,1)$ and $(2,2)$ entries of $M_{X_5}$, respectively.  If $b=0$, then apply to the third rather than second row the procedure described in the rest of Step Five as well as in Step Six.  (Here is where we use that the columns of $M_X$ are linearly independent over $\mathbb{Q}$; this guarantees that if $b=0$, then the $(3,2)$ entry of $M_{X_5}$ is not zero.)  If $b\neq 0$, by the division theorem, there exist integers $q$ and $r$ such that $r=a-q|b|$, where $-|b|<r\leq 0$.  Applying \cite[Lemma 2.6(3)]{Cervantes-Krebs-general-publilshed}, perform a column operations to replace the first column with the first column plus $\pm q$ times the second column, so that the second entry in the first column becomes $r$.  Let $M_{X_6}$ be the resulting matrix.

\item[Step Six:] Let $c$ be the $(2,1)$ entry of $M_{X_6}$.  We still have that $b$ is the $(2,2)$ entry of $M_{X_6}$.  Suppose that $-\frac{|b|}{2}> c$.  We then add $b/|b|$ times the second column to the first column; then multiply the first column by $-1$; and then multiply the first row by $-1$.  Let $M_{X'}$ be the resulting matrix.
\end{enumerate}The matrix $M_{X'}$ will then satisfy all conditions in the definition of modified Hermite normal form.\end{proof}

Suppose we have a $3\times 2$ Heuberger matrix $M_X$.  If $M_X$ has a zero row, then by \cite[Lemma 2.8]{Cervantes-Krebs-general-publilshed}, we can delete it without affecting the chromatic number $\chi$, whereupon Thm. \ref{theorem-m-equals-2} can be used to find $\chi$.  If the columns of $M_X$ are linearly dependent over $\mathbb{Q}$, then as per \cite[Lemma 2.6]{Cervantes-Krebs-general-publilshed} appropriate column operations that do not change $X$ will produce a zero column, which can be deleted without changing $X$.  \cite[Thm. 2.15]{Cervantes-Krebs-general-publilshed} can then be used to find $\chi(X)$.  Otherwise, in light of Lemma \ref{lemma-modified-Hermite-normal-form}, we lose no generality by assuming that $M_X$ is in modified Hermite normal form.

We next show that when $M_X$ is in modified Hermite normal form, we can determine immediately whether $X$ has loops.  Recall that $e_j$ is the $j$th standard basis vector, with a $1$ as its $j$th entry and $0$ for every other entry.

\begin{Lem}\label{lemma-loops}Let $X$ be a standardized abelian Cayley graph with a Heuberger matrix $M_X$.  Suppose that $M_X$ is a $3\times 2$ matrix in modified Hermite normal form.  Then $X$ has loops if and only if either the first column of $M_X$ is $e_1$, or the second column of $M_X$ is $e_3$.\end{Lem}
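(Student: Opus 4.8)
The plan is to characterize when $X$ has loops by recalling that $X$ has a loop if and only if one of the standard basis vectors $e_1, e_2, e_3$ lies in the $\mathbb{Z}$-span $H$ of the columns of $M_X$. This is the same criterion used in Lemma \ref{lemma-loops-2-by-2}, and it reduces the problem to a concrete lattice-membership question. The forward direction of the biconditional is the easy one: if the first column of $M_X$ equals $e_1$, then trivially $e_1 \in H$; and if the second column equals $e_3$, then $e_3 \in H$. In either case $X$ has a loop. So the bulk of the work is the converse: assuming $X$ has loops, I must show that the modified Hermite normal form forces the first column to be $e_1$ or the second column to be $e_3$.

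For the converse, I would work directly with the structure imposed by Def. \ref{def-modified-Hermite-normal-form}. Write $M_X$ with $y_{12} = 0$ and $y_{11} > 0$. Suppose some $e_j \in H$, say $e_j = r\, (y_{11}, y_{21}, y_{31})^t + s\,(0, y_{22}, y_{32})^t$ for integers $r, s$. The key leverage is the top-row equation: the first coordinate reads $r y_{11} = \delta_{j1}$, which is $0$ unless $j = 1$ and is $1$ when $j=1$. I would split into the three cases $j = 1, 2, 3$. When $j = 2$ or $j = 3$, the top-row equation gives $r y_{11} = 0$, and since $y_{11} > 0$ we get $r = 0$, so the membership reduces to $s\,(y_{22}, y_{32})^t$ being $e_2$ or $e_3$ in the lower two coordinates; this forces $s y_{22} = 0$ or $1$ and $s y_{32} = 0$ or $1$, and I would use the size and ordering conditions ($|y_{22}| \leq |y_{32}|$, $y_{22} \leq y_{32}$, and the half-size bound on $y_{21}$ or $y_{31}$) to pin down that the only possibility is $y_{22} = 0$, $y_{32} = \pm 1$, which together with condition (6)(i) forces the second column to be $e_3$. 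When $j = 1$, the top-row equation gives $r y_{11} = 1$, so $y_{11} = 1$ and $r = 1$; the remaining two coordinates then read $y_{21} + s y_{22} = 0$ and $y_{31} + s y_{32} = 0$, and I would again invoke the half-size bounds of condition (6) to show $s = 0$ and hence $y_{21} = y_{31} = 0$, making the first column $e_1$.

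The main obstacle I anticipate is the careful case analysis in condition (6), which bifurcates according to whether $y_{22} = 0$ or not, and the need to keep the inequalities $-\tfrac12|y_{22}| \leq y_{21} \leq 0$ (resp.\ $-\tfrac12|y_{32}| \leq y_{31} \leq 0$) pulling in the right direction. The subtlety is that an integer $t$ satisfying $0 \leq -t \leq \tfrac12|b|$ and $t \equiv 0 \pmod{|b|}$ must vanish; this is exactly the elementary observation that a multiple of $|b|$ lying in a half-open interval of length less than $|b|$ around $0$ can only be $0$. I would isolate this as the crux of the argument, applying it to conclude $s y_{22} = -y_{21}$ forces $y_{21} = 0$ (and symmetrically in the $y_{31}$ case), since the half-size constraint leaves no room for a nonzero multiple. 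Once that elementary interval bound is dispatched, the rest of the proof is bookkeeping across the three cases for $j$, and the biconditional assembles immediately.
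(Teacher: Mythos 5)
Your proposal is correct and takes essentially the same approach as the paper: the paper's proof likewise reduces loops to membership of some $e_j$ in the $\mathbb{Z}$-span $H$ of the columns and then reads the conclusion off the conditions of modified Hermite normal form, merely leaving implicit the case analysis on $j$ that you carry out in full (including the key observation that a multiple of $|b|$ of absolute value at most $\tfrac12|b|$ must vanish). One small correction to your bookkeeping: in the $j=3$ case it is condition (4), $y_{22}\leq y_{32}$, rather than condition (6)(i), that excludes $y_{32}=-1$ and pins the second column to $e_3$ instead of $-e_3$; condition (6)(i) only constrains $y_{31}$, which sits in the first column.
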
\begin{proof}Let $H$ be the $\mathbb{Z}$-span of the columns of $M_X$.  The graph $X$ has loops if and only if $\pm e_j\in H$ for some~$j$.  From the definition of modified Hermite normal form, we see that this can occur if and only if either the first column of $M_X$ is $e_1$ or the second column of $M_X$ is $e_3$.
\end{proof}


\subsection{Chromatic numbers of graphs with $3\times 2$ matrices in modified Hermite normal form}\label{subsection-m-3}

In this section we prove the following theorem, which completely determines the chromatic number of an arbitrary standardized abelian Cayley graph with a $3\times 2$ Heuberger matrix in modified Hermite normal form.

\begin{Thm}\label{theorem-m-equals-3}  Let $X$ be a standardized abelian Cayley graph with a Heuberger matrix \[M_X=\begin{pmatrix}
y_{11} & 0\\
y_{21} & y_{22}\\
y_{31} & y_{32}\\
\end{pmatrix}\] in modified Hermite normal form.

\begin{enumerate}

\item If the first column of $M_X$ is $e_1$ or the second column of $M_X$ is $e_3$, then $X$ has loops and cannot be properly colored.

\item If $y_{11}+y_{21}+y_{31}$ and $y_{22}+y_{32}$ are both even, then $\chi(X)=2$.

\item\label{item-six-bad-cases} If \[M_X=\begin{pmatrix}
1 & 0\\
0 & 1\\
\pm 3k & 1+3k
\end{pmatrix}\text{ or }M_X=\begin{pmatrix}
1 & 0\\
0 & -1\\
\pm 3k & -1+3k
\end{pmatrix}\text{ or }M_X=\begin{pmatrix}
1 & 0\\
-1 & 2\\
-1-3k & 2+3k
\end{pmatrix}\text{ or }M_X=\begin{pmatrix}
1 & 0\\
-1 & -2\\
-1+3k & -2+3k
\end{pmatrix}\]

\[\text{ for some  positive integer }k,\text{ or } M_X=\begin{pmatrix}
1 & 0\\
0 & -1\\
3b & 2
\end{pmatrix}\text{ for some  integer }b, \text{ or}\]

\[M_X=\begin{pmatrix}
1 & 0\\
-1 & a\\
-1 & a+3(k-1)
\end{pmatrix}\text{ for some integer }a\text{ with }3\nmid a\text{ and some positive integer }k,\] then $\chi(X)=4$.

\item If none of the above conditions hold, then $\chi(X)=3$.

\end{enumerate}\end{Thm}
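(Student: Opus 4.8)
The plan is to settle the first two parts outright and then push every remaining graph down to the already-resolved $2\times 2$ case by a rank-reducing graph homomorphism. Part~1 is exactly Lemma~\ref{lemma-loops}, and Part~2 is \cite[Lemma~\ref{general-lemma-bipartite}]{Cervantes-Krebs-general}; so I would assume from here on that $X$ has no loops and is not bipartite. Then $\chi(X)\ge 3$ automatically, and the whole problem becomes separating the value $3$ from the value $4$. The engine is a group homomorphism $\psi\colon\mathbb{Z}^3\to\mathbb{Z}^2$ that identifies two of the three generators up to sign; such a $\psi$ carries $H$ into the column lattice of a $2\times 2$ matrix and hence induces a graph homomorphism from $X$ onto a standardized abelian Cayley graph $Y$ with that matrix. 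Since the existence of a homomorphism $X\to Y$ forces $\chi(X)\le\chi(Y)$, and $\chi(Y)$ is computed by Theorem~\ref{theorem-m-equals-2}, upper bounds on $\chi(X)$ are produced essentially for free once we choose $\psi$ well.

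The distinguished choice is $e_1\mapsto e_1$, $e_2\mapsto e_2$, $e_3\mapsto -e_2$, which sends the two columns of $M_X$ to $(y_{11},\,y_{21}-y_{31})^t$ and $(0,\,y_{22}-y_{32})^t$; informally it subtracts the third row from the second. The determinant of the resulting $2\times 2$ matrix is $y_{11}(y_{22}-y_{32})$, and the third defining condition of modified Hermite normal form --- the congruence $y_{11}y_{22}\equiv y_{11}y_{32}\Mod 3$ --- is precisely what forces this determinant to be divisible by $3$. Consequently, whenever the target $Y$ is loopless, Corollary~\ref{corollary-det-divis-by-3} gives $\chi(Y)\le 3$, and therefore $\chi(X)=3$. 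This disposes of the generic case immediately; the substance of the theorem is to determine exactly when it breaks down.

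By Lemma~\ref{lemma-loops-2-by-2} applied to $Y$, the target picks up loops only in a handful of degenerate regimes, namely when $y_{22}-y_{32}=\pm 1$, when $y_{11}=1$, or when the second row of $Y$ vanishes. In each such regime I would fall back on the companion homomorphisms --- adding the rows instead of subtracting them ($e_3\mapsto e_2$), or, when $y_{11}=1$ so that the first relation expresses $e_1$ through $e_2$ and $e_3$, collapsing the first generator into the others --- and read off $\chi(Y)$ from Theorem~\ref{theorem-m-equals-2} together with Heuberger's Theorem~\ref{theorem-Heubergers}. The bulk of the work is the case analysis showing that for all but finitely many parameter values in these regimes some admissible homomorphism still lands on a loopless, $3$-chromatic $2\times 2$ graph, and that the tuples for which every attempt fails are exactly the six families in Part~3.

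For those six families I would establish $\chi=4$ in two halves. For the upper bound it suffices to exhibit a single rank-reducing homomorphism onto a loopless $2\times 2$ graph other than $K_5$, since by Theorem~\ref{theorem-m-equals-2} every such graph is at most $4$-chromatic, and pulling back a $4$-coloring bounds $\chi(X)\le 4$. For the lower bound I would locate inside $X$ either a diamond lanyard or a copy of $C_{13}(1,5)$ and invoke Lemma~\ref{lemma-chi-of-diamond-lanyard} or Lemma~\ref{lemma-chi-of-C-13-1-5}; the lanyards are built by chaining diamonds along translates of a fixed generator and closing the chain, in the same spirit as the $\chi\ge 4$ argument for the $4$-chromatic Heuberger circulants following Theorem~\ref{theorem-Heubergers}. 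I expect the main obstacle to be exactly this last step combined with exhaustiveness: one must check that the split over the degenerate regimes overlooks no seventh family, and, family by family, pin down the right obstruction subgraph, since its type (lanyard versus $C_{13}(1,5)$) and, for a lanyard, its length and closure depend delicately on the parameters $k$, $a$, and $b$.
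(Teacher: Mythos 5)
Your proposal follows essentially the same route as the paper: Part~1 via Lemma~\ref{lemma-loops}, Part~2 via the bipartite criterion, then the row-subtracting homomorphism whose $2\times 2$ target has determinant $y_{11}(y_{22}-y_{32})$ divisible by $3$ by design of modified Hermite normal form, so Corollary~\ref{corollary-det-divis-by-3} settles the generic case, with the loopy degenerate regimes handled by alternate homomorphisms and Heuberger's theorem, upper bounds for the six families by mapping onto loopless non-$K_5$ targets, and lower bounds by diamond-lanyard constructions (the paper packages these regimes into the three technical lemmas on ``first column all ones,'' ``L-shaped,'' and ``$I$ on top'' matrices, but the mechanism is identical). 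The only substantive content you leave unexecuted---the exhaustive case analysis confirming exactly six families---is precisely the long computation the paper itself carries out (partly deferring details to the authors' notes), so your plan matches the published proof in both strategy and structure.
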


The main idea behind the proof of Theorem \ref{theorem-m-equals-3} is to add or subtract two rows as per \cite[Lemma 2.10]{Cervantes-Krebs-general-publilshed} to obtain a homomorphism from $X$ to a graph with a $2\times 2$ Heuberger matrix.  Theorem \ref{theorem-m-equals-2} and its corollary provide conditions under which this latter graph (and hence $X$) is $3$-colorable.  If those conditions are not met, then we get information about $M_X$.  In this case we can repeat the procedure using some other homomorphism to further narrow down the possibilities for those $M_X$ for which $\chi(X)>3$.  In particular, we are led in this way to consider three special types of matrices $M_X$: those with $y_{22}=0$ (we call these ``L-shaped'' matrices); those with $y_{11}=y_{22}=1$ and $y_{21}=0$ (we call these ``$I$ on top'' matrices); and those with $y_{11}=y_{21}=y_{31}=1$ (we call these ``first column all ones'' matrices).  (We remark that \cite{Tim}, which deals with $4\times 2$ Heuberger matrices, introduces the terminology ``tri-triangle'' for a column in which exactly three entries are $\pm 1$ and the other entries are $0$.  Matrices with such columns play a significant role for the $4\times 2$ case.  The term ``tri-triangle'' alludes to the fact that this column corresponds to a triangle in the graph brought about by three distinct generators.) Every exceptional case traces back ultimately to one of these three.  Hence, to lay the groundwork for the proof of Theorem \ref{theorem-m-equals-3}, we first prove three technical lemmas which compute the chromatic numbers in these three circumstances.  The proofs of all three use the same main idea: Add or subtract rows to map to a graph with a $2\times 2$ matrix.  If such maps fail to produce a $3$-coloring, then show that $X$ contains a diamond lanyard.

We begin with ``first column all ones'' matrices.

\begin{Lem}\label{lemma-first-column-pos-1-neg-1-pos-1}Suppose we have \[\begin{pmatrix}
    1 && 0 \\
    1 && y_{22} \\
    1 && y_{32} 
    \end{pmatrix}^{\text{SACG}}_X.\]  Then $X$ has loops if and only if $\{y_{22},y_{32}\}$ is $\{0,-1\}$ or $\{0,1\}$ or $\{-1\}$ or $\{1\}$.   Otherwise, \[\chi(X)=\begin{cases}
        3 & \text{ if }y_{32}\equiv -y_{22}\Mod 3\\
        4 & \text{ if }y_{32}\not \equiv -y_{22}\Mod 3.
    \end{cases}\]\end{Lem}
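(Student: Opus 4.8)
The plan is to read off the two relations defining $X$, namely $e_1+e_2+e_3\equiv 0$ and $y_{22}e_2+y_{32}e_3\equiv 0$ in $\mathbb{Z}^3/H$, and to exploit the symmetric ``triangular'' role played by $e_1,e_2,e_3$. First I would settle the loop claim: $X$ has a loop iff $\pm e_j\in H$ for some $j$, and solving the three integer systems $r(1,1,1)^t+s(0,y_{22},y_{32})^t=\pm e_j$ directly shows this happens exactly when $\{y_{22},y_{32}\}$ is $\{0,\pm1\}$ or $\{\pm1\}$, as claimed. Assuming henceforth that $X$ has no loops, note that $\{0,\,e_1,\,e_1+e_2\}$ is a genuine triangle (the vertices are distinct precisely because there are no loops, using $e_1+e_2=-e_3$), so $\chi(X)\geq 3$; in particular $X$ is never bipartite in this family.

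For the upper bound in the congruent case I would use the homomorphism $\phi\colon\mathbb{Z}^3\to\mathbb{Z}_3$ sending every $e_i\mapsto 1$. It kills the first column automatically ($1+1+1\equiv 0$) and kills the second column iff $y_{22}+y_{32}\equiv 0\Mod 3$, i.e.\ iff $y_{32}\equiv -y_{22}\Mod 3$. Since $\phi(\pm e_i)=\pm 1\neq 0$, in that case $\phi$ descends to a proper $3$-coloring, so $\chi(X)=3$.

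It remains to treat $y_{32}\not\equiv -y_{22}\Mod 3$ and show $\chi(X)=4$. For the upper bound I would add rows as in \cite[Lemma \ref{general-lemma-homomorphisms}]{Cervantes-Krebs-general}: merging rows $\{2,3\}$, $\{1,2\}$, and $\{1,3\}$ yields three graph homomorphisms from $X$ onto graphs with $2\times 2$ matrices which, by Lemma \ref{lemma-sufficient-condition-to-be-properly-given-circulant}, are the Heuberger circulants $C_{n_i}(1,2)$ with $n_1=y_{22}+y_{32}$, $n_2=2y_{32}-y_{22}$, and $n_3=2y_{22}-y_{32}$. By Theorem \ref{theorem-Heubergers} (the case $b\equiv 2a$), $\chi(C_n(1,2))=4$ whenever $3\nmid n$, $n\neq\pm 5$, and $n\notin\{0,\pm1,\pm2\}$. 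Since $n_1\equiv -n_2\equiv -n_3\Mod 3$, all three satisfy $3\nmid n_i$; using $n_2+n_3=n_1$ together with the facts that $n_2=0$ or $n_3=0$ would force $3\mid n_1$, and that a short case check shows all three $n_i$ lying in $\{\pm1,\pm2,\pm5\}$ forces a loop, one obtains some $n_i\notin\{0,\pm1,\pm2,\pm5\}$. Composing a $4$-coloring of $C_{n_i}(1,2)$ with the homomorphism then gives $\chi(X)\leq 4$.

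The lower bound $\chi(X)\geq 4$ is where I expect the main obstacle. Writing $f_2=e_2$, $f_3=e_3$ and $e_1=-f_2-f_3$ exhibits the connection set as $\{\pm f_2,\pm f_3,\pm(f_2+f_3)\}$, so $X$ is the triangular-lattice Cayley graph on $G=\mathbb{Z}^2/\langle(y_{22},y_{32})\rangle$. Two triangles sharing an edge form a diamond, and a straight chain of $\ell$ translates of one diamond is an unclasped diamond lanyard whose endpoints differ by $\ell w$ for a fixed diamond vector $w$; the clasp edge appears once $\ell w$ is congruent modulo $(y_{22},y_{32})$ to a connection-set element, and I would show such an $\ell$ exists exactly because $3\nmid(y_{22}+y_{32})$, giving $\chi(X)\geq 4$ by Lemma \ref{lemma-chi-of-diamond-lanyard}. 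The delicate part is closing up the lanyard inside the quotient $G$. A cleaner alternative that bypasses the explicit lanyard is the rigidity of $3$-colorings of the triangular lattice: the constraint that each triangle use all three colors forces the color increments in the $f_2$- and $f_3$-directions to be equal and constant, so every proper $3$-coloring has the form $c(a,b)=\alpha(a+b)+\text{const}$ with $\alpha\in\{1,2\}$; this descends to $G$ iff $\alpha(y_{22}+y_{32})\equiv 0\Mod 3$, which is impossible when $3\nmid(y_{22}+y_{32})$. Either route yields $\chi(X)=4$ in the remaining case, completing the proof.
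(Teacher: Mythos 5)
Your proposal is correct, but both substantive halves run along genuinely different lines from the paper's proof. For the upper bound $\chi(X)\leq 4$, the paper uses only the single merge of rows $2$ and $3$, landing on $\bigl(\begin{smallmatrix}1 & 0\\ 2 & y_{22}+y_{32}\end{smallmatrix}\bigr)$, and then disposes of the six exceptional values $y_{22}+y_{32}\in\{-5,-2,-1,1,2,5\}$ one at a time with ad hoc secondary homomorphisms (several deferred to the authors' external notes); your symmetric use of all three merges, with the identities $n_2+n_3=n_1$ and $n_1\equiv -n_2\equiv -n_3\Mod 3$ and the verifiable fact that all three $n_i$ landing in $\{\pm1,\pm2,\pm5\}$ forces exactly the loop configurations $\{y_{22},y_{32}\}\in\{\{1\},\{-1\},\{0,1\},\{0,-1\}\}$, handles every case uniformly and self-containedly --- a genuine economy. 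For the lower bound, the paper explicitly constructs a clasped diamond lanyard by concatenating chains in two different directions (the $(0,1,-1)$-shifts and the length-two $(0,3,0)$-blocks) and choosing $x=\pm1+y_{22}+3\ell$ to clasp; your rigidity argument --- pull back a hypothetical $3$-coloring along $\mathbb{Z}^2\to\mathbb{Z}^2/\langle(y_{22},y_{32})\rangle$, use the diamond-endpoint constraint to show every $3$-coloring of the triangular lattice is $c(a,b)=\alpha(a+b)+\beta$ with $\alpha\in\{1,2\}$, and note invariance forces $3\mid y_{22}+y_{32}$ --- is complete and arguably cleaner, since the same endpoint observation powers both proofs but you avoid closing up a lanyard in the quotient. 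Two caveats: your first (lanyard) route is indeed under-specified as you suspected --- a chain of translates of a single diamond need not clasp, which is precisely why the paper combines two directions --- but it is superseded by the rigidity route; and the paper's explicit lanyard is not wasted effort on its side, since the existence of a diamond lanyard subgraph in each exceptional case is reused later for the global claim (*) that diamond lanyards and $C_{13}(1,5)$ are the only obstructions to $3$-colorability, which your rigidity argument does not supply.
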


\begin{proof}The statement about loops is straightforward.  Now suppose that $X$ does not have loops.  Let $M_X$ be the matrix in the lemma statement.

The first column's sum is odd, so $X$ cannot be bipartite, by \cite[Lemma 2.11]{Cervantes-Krebs-general-publilshed}.

If $y_{32}\equiv -y_{22}\Mod 3$, then we get $\chi(X)=3$ by applying \cite[Lemma 2.12]{Cervantes-Krebs-general-publilshed}.

Now assume that $y_{32}\not \equiv -y_{22}\Mod 3$, in other words that $y_{32}=-y_{22}\pm 1+3\ell$ for some integer $\ell$.  First we show that $\chi(X)>3$, by showing that $X$ contains a diamond lanyard.  Let $H$ be the subgroup of $\mathbb{Z}^3$ generated by the columns of $M_X$.  Recall that vertices of $X$ are of the form $(a,b,c)^t+H$, which we denote by $\overline{(a,b,c)^t}$.  The vertices $\overline{(0,0,0)^t},\overline{(0,1,0)^t},\overline{(0,0,-1)^t},$ and $\overline{(0,1,-1)^t}$ form a diamond in $X$.  Shifting this diamond $x-1$ times by $\overline{(0,1,-1)^t}$ and concatenating, we produce an unclasped diamond lanyard $L_1$ of length $x$ with endpoints $\overline{(0,0,0)^t}$ and $\overline{(0,x,-x)^t}$.  In a similar vein, we have an unclasped diamond lanyard of length $2$ formed by one diamond with vertices $\overline{(0,0,0)^t}, \overline{(0,1,0)^t}, \overline{(0,1,1)^t},$ and $\overline{(0,2,1)^t}$ and another diamond with vertices $\overline{(0,2,1)^t}, \overline{(0,2,0)^t}, \overline{(0,3,1)^t},$ and $\overline{(0,3,0)^t}$.  Its endpoints are $\overline{(0,0,0)^t}$ and $\overline{(0,3,0)^t}$.  Assume for now that $\ell\geq 1$.  Shifting $\ell-1$ times by $\overline{(0,3,0)^t}$ and concatenating, we produce an unclasped diamond lanyard $L_2$ of length $2\ell$ with endpoints $\overline{(0,0,0)^t}$ and $\overline{(0,3\ell,0)^t}$.  Conjoining $L_1$ and $L_2$ gives us an unclasped diamond lanyard of length $x+2\ell$ with endpoints $\overline{(0,x,-x)^t}$ and $\overline{(0,3\ell,0)^t}$.  Taking $x=y_{32}$ produces an edge between $\overline{(0,x,-x)^t}$ and $\overline{(0,3\ell,0)^t}$, and thus we have a clasped diamond lanyard in $X$.  By Lemma \ref{lemma-chi-of-diamond-lanyard}, we have that $\chi(X)\geq 4$.  A similar procedure gives the same result when $\ell\leq 0$.


Now we show that $\chi(X)\leq 4$.  By \cite[Lemma 2.10]{Cervantes-Krebs-general-publilshed} we have a homomorphism

\[\begin{pmatrix}
    1 && 0 \\
    1 && y_{22} \\
    1 && y_{32} 
    \end{pmatrix}_X^{\text{SACG}}\xrightarrow{\ocirc}
    \begin{pmatrix}
    1 && 0 \\
    2 && y_{22}+y_{32} 
    \end{pmatrix}_Y^{\text{SACG}}.\]
    
The results of \S\ref{subsection-m-2} imply that we fail to get a $4$-coloring if and only if $y_{22}+y_{32}\in\{-5, -2, -1, 1, 2, 5\}$.

First we consider the case where $y_{22}+y_{32}=-5$.  Then \[\begin{pmatrix}
    1 && 0 \\
    1 && y_{22} \\
    1 && -y_{22}-5 
    \end{pmatrix}_X^{\text{SACG}}\xrightarrow{\ocirc}
    \begin{pmatrix}
    2 && -y_{22}-5 \\
    1 && y_{22} 
    \end{pmatrix}^{\text{SACG}}\cong \begin{pmatrix}
    1 && 0 \\
    2 && 3y_{22}+5 
    \end{pmatrix}_{Y'}^{\text{SACG}}
    .\]

Here we use \cite[Lemmas 2.10 and 2.6]{Cervantes-Krebs-general-publilshed}, and from now on we shall do this sort of thing without referring to these lemmas each time.  By Theorem \ref{theorem-m-equals-2}, we have that $Y'$ is $4$-colorable unless $y_{22}=-1$ or $y_{22}=0$.  But in the first case we have:\[\begin{pmatrix}
    1 && 0 \\
    1 && -1 \\
    1 && -4 
    \end{pmatrix}_X^{\text{SACG}}\xrightarrow{\ocirc}\begin{pmatrix}
    2 && -1 \\
    1 && -4 
    \end{pmatrix}^{\text{SACG}}\cong C_7(1,4),\]which is $4$-colorable.  In the second case, we have a graph homomorphism to $C_{10}(1,2)$, which is which is $4$-colorable.  Here we use Theorems \ref{theorem-m-equals-2} and \ref{theorem-Heubergers} as well as Lemma \ref{lemma-sufficient-condition-to-be-properly-given-circulant}.  In the sequel, usually we will simply compute chromatic numbers of graphs with $2\times 2$ matrices using the results of \S\ref{subsection-m-2} without referring to the specific theorems and lemmas used.

The other cases, where $y_{22}+y_{32}$ equals $-2$ or $-1$ or $1$ or $2$ or $5$, can each be dealt with in a similar way.  We omit the proofs here, but complete details can be found in our authors' notes, which are housed on the second author's website \cite{Cervantes-Krebs}.\end{proof}

\vspace{.1in}

Next we tackle ``L-shaped'' matrices.  By performing row and column operations not unlike those in \S\ref{subsection-restricted-Hermite-normal-form}, it suffices to consider only matrices with some additional restrictions imposed.

\begin{Lem}\label{lemma-L-shaped}Suppose we have \[\begin{pmatrix}
    y_{11} && 0 \\
    y_{21} && 0 \\
    y_{31} && y_{32} 
    \end{pmatrix}^{\text{SACG}}_X,\]  
    
    where $y_{11},y_{21},y_{32}>0$ and $-\frac{y_{32}}{2}\leq y_{31}\leq 0$.
    
    Then:
    
\begin{enumerate}

    \item We have that $X$ has loops if and only if $y_{32}=1$.
    
    \item We have that $\chi(X)=2$ if and only if $y_{11}+y_{21}+y_{31}$ and $y_{32}$ are both even.
    
    \item We have that $\chi(X)=4$ if and only if $y_{11}=y_{21}=-y_{31}=1$ and $3\nmid y_{32}$ and $y_{32}>1$.
    
    \item Otherwise, $\chi(X)=3$.

\end{enumerate}
\end{Lem}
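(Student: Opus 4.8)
The plan is to follow the template laid out by the authors in Lemma \ref{lemma-first-column-pos-1-neg-1-pos-1}: handle the loops and bipartite cases quickly, then for the remaining cases establish the chromatic number by sandwiching it between a lower bound (exhibiting a diamond lanyard when $\chi \geq 4$) and an upper bound (producing an explicit $3$- or $4$-coloring via a row-subtraction homomorphism to a $2 \times 2$ matrix, whose chromatic number is known from Theorem \ref{theorem-m-equals-2}). First I would dispose of Statements (1) and (2). For (1), by Lemma \ref{lemma-loops}, since the matrix is L-shaped ($y_{22}=0$) with $y_{11}, y_{21}, y_{32} > 0$, the only way to get a loop is for the second column to be $e_3$, which forces $y_{31}=0$ and $y_{32}=1$; conversely $y_{32}=1$ gives the loop. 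Statement (2) is immediate from the bipartiteness criterion \cite[Lemma \ref{general-lemma-bipartite}]{Cervantes-Krebs-general} applied to the column sums $y_{11}+y_{21}+y_{31}$ and $y_{32}$.

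For the main content, assume $X$ has no loops and is not bipartite. The natural first homomorphism is to subtract the first two rows (which are identical in the second column, both $0$) to collapse to a $2 \times 2$ matrix. I would apply \cite[Lemma \ref{general-lemma-homomorphisms}]{Cervantes-Krebs-general} to map $X$ onto a graph $Y$ with matrix $\bigl( \begin{smallmatrix} y_{11}+y_{21} & 0 \\ y_{31} & y_{32} \end{smallmatrix} \bigr)$ or some similar row-combination, and then read off $\chi(Y)$ from Theorem \ref{theorem-m-equals-2}. Because the target is an L-shaped-derived $2\times 2$ matrix, Theorem \ref{theorem-m-equals-2} will give $\chi(Y) \leq 3$ in most parameter ranges, which pulls back to a $3$-coloring of $X$. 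The exceptional parameter values where the homomorphic image fails to be $3$-colorable are exactly where I expect the condition $y_{11}=y_{21}=-y_{31}=1$ to emerge, mirroring how the ``first column all ones'' structure forced the bad cases in the previous lemma. When a single homomorphism does not certify $3$-colorability, I would try alternate row combinations (as the authors do, cycling through which rows to add or subtract) to squeeze out a $3$-coloring, and only the genuinely obstructed matrices should survive.

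For Statement (3), the lower bound $\chi(X) \geq 4$ requires exhibiting a clasped diamond lanyard inside $X$ whenever $y_{11}=y_{21}=-y_{31}=1$, $3 \nmid y_{32}$, and $y_{32}>1$. Here I would work directly in $\mathbb{Z}^3/H$ with the explicit vertex-coset notation $\overline{(a,b,c)^t}$, constructing diamonds from the generators $\pm e_i$ and concatenating them, exactly as in the proof of Lemma \ref{lemma-first-column-pos-1-neg-1-pos-1}; the condition $3 \nmid y_{32}$ is what guarantees the lanyard closes up into a clasp rather than admitting a $3$-coloring, and Lemma \ref{lemma-chi-of-diamond-lanyard} then gives $\chi(X) \geq 4$. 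For the matching upper bound $\chi(X) \leq 4$, I would again use a homomorphism to a $2 \times 2$ graph that is $4$-colorable by Theorem \ref{theorem-m-equals-2} and Theorem \ref{theorem-Heubergers}, pulling back a $4$-coloring. The condition $y_{32}>1$ is needed to separate these cases from the $y_{32}=1$ loop case in Statement (1).

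The main obstacle I anticipate is the bookkeeping in Statement (3)'s lower bound: constructing an explicit diamond lanyard in the quotient group and verifying, via the precise arithmetic of $y_{32}$ modulo $3$, that the endpoints are genuinely adjacent (so the clasp exists) and that the diamonds are valid subgraphs given the generating set. The congruence condition $3 \nmid y_{32}$ must be used delicately to guarantee the lanyard cannot be $3$-colored, and getting the lengths and shift vectors of the concatenated diamonds exactly right — so that the lanyard's two endpoints land on a pair of vertices differing by a generator — is the delicate computational heart of the argument. The upper-bound homomorphisms and the elimination of non-exceptional cases should be comparatively routine once the right row operations are identified.
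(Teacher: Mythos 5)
Your architecture (loops/bipartite first, then row-collapse homomorphisms to $2\times 2$ matrices for upper bounds, diamond lanyards for the lower bound) matches the paper's, but the proposal has a genuine gap at its center: you give no mechanism for proving the \emph{only if} direction of Statement (3), i.e., that every non-exceptional matrix is $3$-colorable. Saying that Theorem \ref{theorem-m-equals-2} ``will give $\chi(Y)\leq 3$ in most parameter ranges'' and that one should ``try alternate row combinations'' until ``only the genuinely obstructed matrices survive'' is precisely the claim that needs proof, and with infinitely many parameter values and row combinations there is no terminating criterion in your plan. The paper's key organizing idea, which you never identify, is a mod-$3$ pigeonhole: one of $y_{11}$, $y_{21}$, $y_{11}+y_{21}$, $y_{11}-y_{21}$ is divisible by $3$, and collapsing the corresponding pair of rows produces a $2\times 2$ matrix $M_Y$ with $3\mid\det M_Y$, so Corollary \ref{corollary-det-divis-by-3} certifies $3$-colorability outright --- no Heuberger-circulant computation needed --- \emph{unless $Y$ has loops}. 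You never invoke Corollary \ref{corollary-det-divis-by-3}, and consequently you also fail to foresee where the real work lives: the loop-obstructed residue of the $3\mid y_{11}-y_{21}$ case, namely $y_{11}=y_{21}$ with $\gcd(y_{31},y_{32})=1$. There every natural row collapse degenerates, and the paper must multiply $M_X$ on the right by the unimodular matrix $\bigl(\begin{smallmatrix}\alpha & -y_{32}\\ \beta & y_{31}\end{smallmatrix}\bigr)$ (with $\alpha y_{31}+\beta y_{32}=1$) to manufacture usable homomorphisms, then grind through the exceptional cases of Theorem \ref{theorem-Heubergers} one at a time, using the normalization $-\frac{y_{32}}{2}\leq y_{31}\leq 0$ to force contradictions or reduce to $y_{31}=-1$. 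That case analysis is the bulk of the proof, not the ``comparatively routine'' part you predict.

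Two smaller points. First, your citation of Lemma \ref{lemma-loops} for Statement (1) is not quite legitimate: that lemma assumes modified Hermite normal form, and the L-shaped matrix here need not satisfy condition 3 of Definition \ref{def-modified-Hermite-normal-form} (which would require $3\mid y_{11}y_{32}$); the direct argument from $y_{11},y_{21}>0$ is easy and is what the paper intends by ``straightforward.'' Second, your plan to build the diamond lanyard for Statement (3) from scratch is workable but unnecessary: once the case analysis has pinned the exceptional matrices down to $y_{11}=y_{21}=-y_{31}=1$, multiplying the bottom row by $-1$ yields a ``first column all ones'' matrix, and Lemma \ref{lemma-first-column-pos-1-neg-1-pos-1} (with $y_{22}=0$, so the dichotomy there reads $3\mid y_{32}$ versus $3\nmid y_{32}$) delivers both the lower bound $\chi(X)\geq 4$ and the matching upper bound in one stroke --- which is how the paper closes Statement (3).
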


\begin{proof}The first two statements are straightforward to prove.  Now suppose that $X$ does not have loops (i.e., that $y_{32}\geq 2$) and is not bipartite.  Let $M_X$ be the matrix in the lemma statement.  We have that $3$ divides either $y_{11}$, $y_{21}$, $y_{11}+y_{21}$, or $y_{11}-y_{21}$, and we divvy into cases accordingly.

First suppose that $3\mid y_{11}$.  We have\[\begin{pmatrix}
    y_{11} && 0 \\
    y_{21} && 0 \\
    y_{31} && y_{32} 
    \end{pmatrix}^{\text{SACG}}_X\xrightarrow{\ocirc}\begin{pmatrix}
    y_{11} && 0 \\
    y_{21}+y_{31} && y_{32} 
    \end{pmatrix}^{\text{SACG}}_Y.\]We see that $3\mid\det M_Y$, where $M_Y$ is the Heuberger matrix for $Y$ shown above.  By Cor. \ref{corollary-det-divis-by-3}, it follows that $Y$ is $3$-colorable unless it has loops.  But this cannot happen, because $3\mid y_{11}$ and $y_{11}>0$ and $y_{32}\geq 2$.

The case where $3\mid y_{21}$ is handled similarly, as is the case where $3\mid y_{11}+y_{21}$; in this latter case, begin with a homomorphism that ``collapses'' the top two rows by adding them.

Finally, suppose that $3\mid y_{11}-y_{21}$.  We may assume without loss of generality that $y_{11}\geq y_{21}$, for if not, then swap the top two rows.  We have\[\begin{pmatrix}
    y_{11} && 0 \\
    y_{21} && 0 \\
    y_{31} && y_{32} 
    \end{pmatrix}^{\text{SACG}}_X\xrightarrow{\ocirc}\begin{pmatrix}
    y_{11}-y_{21} && 0 \\
    y_{31} && y_{32} 
    \end{pmatrix}^{\text{SACG}}_Y.\]

By Cor. \ref{corollary-det-divis-by-3}, it follows that $Y$ is $3$-colorable unless it has loops.  Because $y_{32}\geq 2$ and $3\mid y_{11}-y_{21}$, this occurs if and only if $y_{11}-y_{21}=0$ and $\text{gcd}(y_{31}, y_{32})=1$.

Now suppose that (ii) occurs.  Let $\alpha, \beta\in\mathbb{Z}$ such that $\alpha y_{31}+\beta y_{32}=1$.  Multiply $M_X$ on the right by the unimodular matrix \[\begin{pmatrix}
    \alpha & -y_{32}\\
    \beta & y_{31}
\end{pmatrix}\] to get \[\begin{pmatrix}
    y_{11} & 0\\
    y_{11} & 0\\
    y_{31} & y_{32}
\end{pmatrix}^{\text{SACG}}_X=\begin{pmatrix}
    \alpha y_{11} & -y_{11}y_{32}\\
    \alpha y_{11} & -y_{11}y_{32}\\
    1 & 0
\end{pmatrix}^{\text{SACG}}_X\xrightarrow{\ocirc}\begin{pmatrix}
    1 & 0\\
    -2\alpha y_{11} & 2y_{11}y_{32}
\end{pmatrix}^{\text{SACG}}_{Y'}\]

Provided $Y'$ does not have loops, $Y'$ is isomophic to the Heuberger circulant $C_{n'}(a',b')$ with $n'=2y_{11}y_{32}$, $a'=2\alpha y_{11}$, $b'=1$.  So $Y'$ is $3$-colorable unless it has loops or one of the exceptional cases in Theorem \ref{theorem-Heubergers} occurs.  We now deal with these possibilities one at a time.

Suppose $Y'$ has loops.  Because $y_{11}>0$ and $y_{32}\geq 2$, this occurs if and only if $2y_{11}y_{32}\mid 2\alpha y_{11}$, which happens if and only if $y_{32}\mid \alpha$.  But then using column operations, we get that $X$ has loops, contrary to our assumptions.  Observe that $n'=\pm 5$ and $n'=\pm13$ cannot happen, because $n'$ is even.  In the remaining cases, we show $y_{11}=1$, then we proceed from there.  Suppose $a'\equiv \pm 2b'\Mod{n'}$. So $2\alpha y_{11}\equiv \pm 2\Mod{2y_{11}y_{32}}$.  So $\alpha y_{11}\equiv \pm 1\Mod{y_{11}y_{32}}$.  So $y_{11}\mid \pm 1$, which implies $y_{11}=1$.  Suppose $2a'\equiv \pm b'\Mod{n}$. So $4\alpha y_{11}\equiv \pm 1\Mod{2y_{11}y_{32}}$.  So $y_{11}\mid \pm 1$, which implies $y_{11}=1$.

Thus we have:\[\begin{pmatrix}
    1 & 0\\
    1 & 0\\
    y_{31} & y_{32}
\end{pmatrix}^{\text{SACG}}_X\xrightarrow{\ocirc}\begin{pmatrix}
    1 & 0\\
    y_{31}+1 & y_{32}
\end{pmatrix}^{\text{SACG}}_{Y''}\]

Then $Y''$ has loops if and only if $y_{32}\mid y_{31}+1$.  But then $y_{31}=ky_{32}-1$ for some integer $k$.  From our assumption that $-\frac{y_{32}}{2}\leq y_{31}\leq 0$, we must have that $k=0$ and $y_{31}=-1$.  The conclusion of the lemma now follows in this case from Lemma \ref{lemma-first-column-pos-1-neg-1-pos-1}.

So now we can assume that $y_{31}<-1$.  Then by Lemma \ref{lemma-sufficient-condition-to-be-properly-given-circulant}, we have that $Y''$ is isomorphic to the Heuberger ciculant $C_{n''}(a'',b'')$ with $n''=y_{32}$, $a''=-y_{31}-1$, $b''=1$.  Suppose $Y''$ is not $3$-colorable.  So one of the exceptional cases in Theorem \ref{theorem-Heubergers} occurs.  We now deal with these possibilities one at a time.

\guillemotright\, Suppose that $n''=5$ and $a''\equiv \pm 2b''\Mod{5}$. Then $y_{32}=5$ and $a''\equiv 2$ or $3$ mod $5$.  Hence $y_{31}=-2$, using that $-\frac{y_{32}}{2}\leq y_{31}\leq 0$ as well as that $y_{31}<-1$.   But then \[\begin{pmatrix}
    1 & 0\\
    1 & 0\\
    -2 & 5
\end{pmatrix}^{\text{SACG}}_X\xrightarrow{\ocirc}\begin{pmatrix}
    1 & 0\\
    -1 & 5
\end{pmatrix}^{\text{SACG}}\] gives us a $3$-coloring of $X$.

\guillemotright\, Suppose $n''=13$ and one of $a''$ or $b''$ is congruent to $\pm 5$ times the other modulo $13$: Then $y_{32}=13$ and $a''\equiv 5$ or $8$ modulo $13$.  From $-\frac{y_{32}}{2}\leq y_{31}\leq 0$ and $y_{31}<-1$ and $a''=-y_{31}-1$, we get $y_{31}=-6$.  But then\[\begin{pmatrix}
    1 & 0\\
    1 & 0\\
    -6 & 13
\end{pmatrix}^{\text{SACG}}_X\xrightarrow{\ocirc}\begin{pmatrix}
    2 & 0\\
    -6 & 13
\end{pmatrix}^{\text{SACG}}\xrightarrow{\ocirc}\begin{pmatrix}
    1 & 0\\
    -3 & 13
\end{pmatrix}^{\text{SACG}}\]gives us a map to a $3$-colorable Heuberger circulant.

\guillemotright\, Suppose $a''\equiv 2b''\Mod{n''}$.  Then $y_{32}\mid y_{31}+3$.  Note we cannot have $y_{31}=-2$, since then $y_{32}\mid 1$, but $y_{32}>1$.

If $y_{31}=-3$, then we have \[\begin{pmatrix}
    1 & 0\\
    1 & 0\\
    -3 & y_{32}\end{pmatrix}^{\text{SACG}}_X\xrightarrow{\ocirc}\begin{pmatrix}
        2 & 0\\
        -3 & y_{32}
    \end{pmatrix}^{\text{SACG}}_{Z}
\]

The graph $Z$ cannot have loops.  Let $a=3, b=2, n=2y_{32}$.  Then $Z$ is isomorphic to the Heuberger circulant $C_n(a,b)$.  We cannot have $n=5$ or $n=13$, because $n$ is even.  If $a\equiv 2b\Mod{n}$, then $2y_{32}\mid -1$, which cannot happen.  If $a\equiv -2b\Mod{n}$, then $2y_{32}\mid 7$, which cannot happen.  If $2a\equiv b\Mod{n}$, then $2y_{32}\mid 4$, which implies that $y_{32}=2$, but this violates $-\frac{y_{32}}{2}\leq y_{31}\leq 0$.  If $2a\equiv -b\Mod{n}$, then $2y_{32}\mid 8$, which implies that $y_{32}=2$ or $y_{32}=4$, both of which violate $-\frac{y_{32}}{2}\leq y_{31}\leq 0$.

Now assume $y_{31}<-3$, which implies that $y_{31}+3< 0$.  So from $y_{32}\mid y_{31}+3$, we get that $y_{32}\leq -3-y_{31}\leq -3+\frac{y_{32}}{2}$.  But $y_{32}>0$, so this cannot happen.

\guillemotright\, Suppose that $a''\equiv -2b''\Mod{n''}$.  Then $y_{32}\mid y_{31}-1$.  So $y_{32}\leq 1-y_{31}\leq 1+\frac{y_{32}}{2}$.  But $y_{32}>1$.

\guillemotright\, Suppose that $2a''\equiv b''\Mod{n''}$.  Then $y_{32}\mid 2y_{31}+3$.  Because $y_{31}\leq -2$, we have $2y_{31}+3<0$.  Hence $y_{32}\leq -2y_{31}-3\leq y_{32}-3$, which is a contradiction.

\guillemotright\, Suppose that $2a''\equiv -b''\Mod{n''}$.  Then $y_{32}\mid 2y_{31}+1$.  So $y_{32}\leq -2y_{31}-1\leq y_{32}-1$, which is a contradiction.\end{proof}

\vspace{.05in}

In our final preparatory step, we contemplate ``$I$ on top'' matrices.  As before, we obtain upper bounds by mapping to graphs with $2\times 2$ matrices, and we obtain a lower bound in some cases by finding diamond lanyards as subgraphs.

\begin{Lem}\label{lemma-I-on-top}Suppose $X$ is a standardized abelian Cayley graph with an associated Heuberger matrix $$M_X=\begin{pmatrix}
1 & 0\\
0 & 1\\
y_{31} & y_{32}
\end{pmatrix},$$ where $y_{31},y_{32}>0$ and $y_{31}\leq y_{32}$.  Then \[\chi(X)=\begin{cases}
    2 & \text{if } y_{31} \text{ and } y_{32} \text{ are both odd}\\
    4 & \text{if } y_{31}=2\text{ and }3\mid y_{32}\\
    4 & \text{if } 1\not\equiv y_{31}\Mod 3\text{ and }y_{32}=1+y_{31}\\
    3 & \text{otherwise}.
    
    \end{cases}\]\end{Lem}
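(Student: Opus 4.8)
The plan is to follow the template already used for Lemmas \ref{lemma-first-column-pos-1-neg-1-pos-1} and \ref{lemma-L-shaped}: dispose of the loop and bipartite cases, obtain upper bounds by collapsing pairs of rows to reach $2\times 2$ Heuberger matrices whose chromatic numbers we already know, and obtain the lower bound $\chi(X)\ge 4$ in the exceptional families by exhibiting diamond lanyards. First I would record the easy reductions. Since $y_{31},y_{32}>0$, neither column of $M_X$ equals a standard basis vector, so $X$ has no loops. The two column sums are $1+y_{31}$ and $1+y_{32}$, so by \cite[Lemma \ref{general-lemma-bipartite}]{Cervantes-Krebs-general} the graph is bipartite exactly when $y_{31}$ and $y_{32}$ are both odd, giving $\chi(X)=2$. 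Henceforth I assume they are not both odd, so $\chi(X)\ge 3$, and it remains only to separate the value $3$ from the value $4$. (It is worth noting for intuition that the quotient group here is infinite cyclic, so $X$ is the integer distance graph $\text{Cay}(\mathbb{Z},\{\pm 1,\pm y_{31},\pm y_{32}\})$; I use this picture only to describe the diamonds concretely, not to invoke any external result.)

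For the upper bounds I apply \cite[Lemma \ref{general-lemma-homomorphisms}]{Cervantes-Krebs-general} to collapse a chosen pair of rows by addition or subtraction, yielding a homomorphism $X\to Y$ and hence $\chi(X)\le\chi(Y)$ for a graph $Y$ with a $2\times 2$ matrix, whose chromatic number is read off from Theorem \ref{theorem-m-equals-2} and Heuberger's Theorem \ref{theorem-Heubergers}. The six collapses produce matrices of determinants $y_{32}+1$, $y_{32}-1$, $y_{31}+1$, $y_{31}-1$, $y_{32}+y_{31}$, and $y_{32}-y_{31}$. The collapse of the second and third rows by addition gives $\tp{1 & 0\\ y_{31} & y_{32}+1}$, which is always loopless (as $0<y_{31}\le y_{32}<y_{32}+1$, the loop criterion of Theorem \ref{theorem-m-equals-2} fails), and I use it as a default witness. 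The core computation is to show that whenever $(y_{31},y_{32})$ avoids the two exceptional families, at least one of the six collapses lands on a loopless, $3$-colorable $2\times 2$ graph --- either because its determinant is a multiple of $3$ and it has no loops (Corollary \ref{corollary-det-divis-by-3}), or because it falls into the ``$\chi=3$'' clause of Theorem \ref{theorem-Heubergers} (for instance a determinant-$7$ collapse giving a $7$-cycle $C_7(1,1)$, which is what rescues the cases with $y_{31}=1$ and $3\mid y_{32}$). This reduces to a finite check organized by the residues of $y_{31}$ and $y_{32}$ modulo $3$ together with the loop criterion; combined with $\chi(X)\ge 3$ it gives $\chi(X)=3$ off the exceptional families.

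It then remains to treat the two exceptional families, namely $y_{31}=2$ with $3\mid y_{32}$, and $1\not\equiv y_{31}\pmod 3$ with $y_{32}=y_{31}+1$. For the upper bound $\chi(X)\le 4$ I again use the second-and-third-row collapse; being loopless, its image is $4$-colorable unless it is a $K_5$, which by Lemma \ref{lemma-sufficient-condition-to-be-properly-given-circulant} and Theorem \ref{theorem-Heubergers} forces determinant $y_{32}+1=5$, hence $y_{32}=4$. In the first family $3\mid y_{32}$ rules this out, and in the second family it leaves only the single pair $(y_{31},y_{32})=(3,4)$, for which I substitute the first-and-second-row difference (determinant $7$, a $4$-colorable circulant). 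For the lower bound $\chi(X)\ge 4$ I exhibit a clasped diamond lanyard. In the first family the four consecutive vertices $\overline{(0,0,j)^t}$, $j=0,1,2,3$, form a diamond --- the four unit steps together with the two distance-$2$ chords realized by $\pm e_1$ --- with non-adjacent endpoints $j=0$ and $j=3$; concatenating $y_{32}/3$ translated copies produces an unclasped lanyard from $\overline{(0,0,0)^t}$ to $\overline{(0,0,y_{32})^t}$, and since the distance $y_{32}=\pm e_2$ is a generator these endpoints are adjacent, supplying the clasp. In the second family I chain diamonds built from the generators of distances $1$, $y_{31}$, and $y_{31}+1=y_{32}$ in two independent directions and choose the number of links so the free endpoints differ by a generator, exactly as in the proof of Lemma \ref{lemma-first-column-pos-1-neg-1-pos-1}. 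In either case Lemma \ref{lemma-chi-of-diamond-lanyard} gives $\chi(X)\ge 4$, hence $\chi(X)=4$.

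The main obstacle, as in the companion lemmas, is the bookkeeping rather than any single hard step: verifying that the two stated families are exactly the residue-and-loop configurations in which every one of the six collapses is either loopy or has chromatic number at least $4$, and dually producing an explicit diamond lanyard in each exceptional family. The upper-bound computations are routine applications of \S\ref{subsection-m-2}; the two delicate points are the loop bookkeeping for collapses whose determinant is divisible by $3$ (precisely these loops are what force the exceptional cases, since divisibility alone would otherwise always give a $3$-coloring), and the two-directional lanyard construction needed to close up the clasp in the second family.
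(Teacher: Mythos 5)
Your proposal matches the paper's proof essentially step for step: the same bipartiteness reduction via the column-sum parity lemma, the same default collapse of the second and third rows onto the loopless circulant $C_{1+y_{32}}(1,y_{31})$ with the $K_5$ exception pinned to $y_{32}=4$, $y_{31}\in\{2,3\}$ and handled by alternative row collapses (including the determinant-$7$ map $\bigl(\begin{smallmatrix}1 & -1\\ 3 & 4\end{smallmatrix}\bigr)$ for $(3,4)$, exactly as in the paper), the same reduction of the $\chi=3$ cases to a finite check over Heuberger's exceptional congruences for the six row collapses, and the same diamond-lanyard constructions giving $\chi(X)\geq 4$ in the two exceptional families. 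The paper itself omits the detailed collapse-by-collapse bookkeeping for the $3$-colorable cases (deferring to the authors' notes), so your deferral of that same bookkeeping is faithful to the structure of its actual argument; the only minor slips are cosmetic (a diamond on four consecutive integers has three unit edges and two chords, not four unit steps, and the paper's family-two lanyard runs in a single direction with two diamond shapes rather than the two-directional conjoining of Lemma \ref{lemma-first-column-pos-1-neg-1-pos-1}).
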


Before embarking on the proof, we note that by \cite[Example 2.4]{Cervantes-Krebs-general-publilshed}, we have that $X$ is isomorphic to the distance graph $\text{Cay}(\mathbb{Z},\{\pm 1, \pm y_{31}, \pm y_{32}\})$.  Hence Lemma \ref{lemma-I-on-top} is a special case of Zhu's theorem, as discussed in \cite{Cervantes-Krebs-Zhu}.  We offer here an alternative proof of Zhu's theorem using Heuberger matrices.

\begin{proof}
\cite[Lemma 2.11]{Cervantes-Krebs-general-publilshed} implies that $\chi(X)=2$ if and only if $y_{31}$ and $y_{32}$ are both odd.

To show that $X$ is $4$-colorable, consider

\[\begin{pmatrix}
    1 & 0\\
    0 & 1\\
    y_{31} & y_{32}\end{pmatrix}^{\text{SACG}}_X\xrightarrow{\ocirc}\begin{pmatrix}
        1 & 0\\
        y_{31} & 1+y_{32}
    \end{pmatrix}^{\text{SACG}}_{Y}\]

By Lemma \ref{lemma-loops-2-by-2} and \ref{lemma-sufficient-condition-to-be-properly-given-circulant}, we see that $Y$ does not contain loops and is isomorphic to the Heuberger circulant $C_{1+y_{32}}(1, y_{31})$.  So $Y$ is $4$-colorable unless $y_{32}=4$ and $y_{31}\in\{2,3\}$.  But in these cases respectively take \[\begin{pmatrix}
    1 & 0\\
    0 & 1\\
    2 & 4\end{pmatrix}^{\text{SACG}}_X\xrightarrow{\ocirc}\begin{pmatrix}
        1 & 0\\
        2 & 3
    \end{pmatrix}^{\text{SACG}} \text{ and }\begin{pmatrix}
    1 & 0\\
    0 & 1\\
    3 & 4\end{pmatrix}^{\text{SACG}}_X\xrightarrow{\ocirc}\begin{pmatrix}
        1 & -1\\
        3 & 4
    \end{pmatrix}^{\text{SACG}}.\]

Indeed, $Y$ is $3$-colorable unless one of the six exceptional cases in Theorem \ref{theorem-Heubergers} occurs.  Those cases each place restrictions on $y_{31}$ and $y_{32}$, whereupon we can modify the mapping appropriately to try to get a $3$-coloring.  One can show that this procedure will produce a $3$-coloring unless either $y_{31}=2$ and $3\mid y_{32}$, or else $1\not\equiv y_{31}\Mod 3$ and $y_{32}=1+y_{31}$.  The logic is quite similar to that in the proofs of Lemmas \ref{lemma-first-column-pos-1-neg-1-pos-1} and \ref{lemma-L-shaped}, so we omit it here.  Complete details can be found in our authors' notes, which are housed on the second author's website \cite{Cervantes-Krebs}.

Finally, we show that if either $y_{31}=2$ and $3\mid y_{32}$, or else $1\not\equiv y_{31}\Mod 3$ and $y_{32}=1+y_{31}$, then $X$ contains a diamond lanyard.  By Lemma \ref{lemma-chi-of-diamond-lanyard}, this will show that $\chi(X)\geq 4$ in these cases.  We note that this is essentially what Zhu does in \cite{Zhu} to find a lower bound on the fractional chromatic number of distance graphs such as these.  Let $H$ be the subgroup of $\mathbb{Z}^3$ generated by the columns of $M_X$.  We denote by $\overline{(a,b,c)^t}$ the vertex $(a,b,c)^t+H$ of $X$.

Suppose $y_{31}=2$ and $3\mid y_{32}$.  We have that $y_{32}=3k\pm1$ for some positive integer $k$.
There is a diamond in $X$ with vertices $\overline{(0,0,0)^t}, \overline{(0,0,1)^t}, \overline{(0,0,2)^t}$, and $\overline{(0,0,3)^t}$.  
Shifting this $k-1$ times by $\overline{(0,0,3)^t}$ and concatenating, we obtain a diamond lanyard of length $k$ with endpoints $\overline{(0,0,0)^t}$ and $\overline{(0,0,3k)^t}$.

Now suppose that $1\not\equiv y_{31}\Mod 3$ and $y_{32}=1+y_{31}$.  So either $y_{31}$ or $y_{32}$ equals $3k$ for some positive integer $k$.  We have a diamond in $X$ with vertices $\overline{(0,0,0)^t}, \overline{(0,0,1)^t}, \overline{(0,0,y_{32})^t}$, and $\overline{(0,0,y_{32}+1)^t}$.  Shifting this by $\overline{(0,0,y_{32}+1)^t}$, we obtain an unclasped diamond lanyard of length two with endpoints $\overline{(0,0,0)^t}$ and $\overline{(0,0,2y_{32}+2)^t}$.  Append to this a diamond with vertices $\overline{(0,0,2y_{32}+2)^t}, \overline{(0,0,y_{32}+3)^t}, \overline{(0,0,y_{32}+2)^t}$, and $\overline{(0,0,3)^t}$.  We thus obtain an unclasped diamond lanyard of length three with endpoints $\overline{(0,0,0)^t}$ and $\overline{(0,0,3)^t}$.  Shifting this $k-1$ times by $\overline{(0,0,3)^t}$ and concatenating, we obtain a diamond lanyard of length $3k$ with endpoints $\overline{(0,0,0)^t}$ and $\overline{(0,0,3k)^t}$. \end{proof}

Finally, we turn our attention to proving Theorem \ref{theorem-m-equals-3}.  The essense of the proof is to show that if we do not have a homomorphism from $X$ to a $3$-colorable graph with a $2\times 2$ matrix, then $M_X$ must be in a form where (perhaps after some manipulations) one of the preceding three lemmas applies.

\begin{proof}[Proof of Theorem \ref{theorem-m-equals-3}]The first statement follows from Lemma \ref{lemma-loops}, and the second follows from \cite[Lemma 2.11]{Cervantes-Krebs-general-publilshed}.

Now suppose that $M_X$ is one of the six types of matrices listed in the third statement.  Lemma \ref{lemma-I-on-top} shows that if $M_X$ is of the form \(\begin{pmatrix}
1 & 0\\
0 & 1\\
3k & 1+3k
\end{pmatrix}\), then $\chi(X)=4$.  For the other five, we can perform row and column operations as per \cite[Lemma 2.6]{Cervantes-Krebs-general-publilshed} to obtain a matrix for an isomorphic graph so that either Lemma \ref{lemma-I-on-top} or \ref{lemma-first-column-pos-1-neg-1-pos-1} proves the third statement of the corollary.  For example, suppose $M_X$ is of the form \(\begin{pmatrix}
1 & 0\\
-1 & 2\\
-1-3k & 2+3k
\end{pmatrix}\) for an integer $k$.  Add the second column to the first and then multiply the third row by $-1$ to produce the matrix \(\begin{pmatrix}
1 & 0\\
1 & 2\\
1 & -2-3k
\end{pmatrix},\) whereupon Lemma \ref{lemma-first-column-pos-1-neg-1-pos-1} applies.  We leave the computations in the other four cases to the reader.

Finally, assume that none of the first three statements apply.  We will show that $X$ has a $3$-coloring.

Take the following mapping:\[\begin{pmatrix}
    y_{11} & 0\\
    y_{21} & y_{22}\\
    y_{31} & y_{32}
\end{pmatrix}^{\text{SACG}}_X\xrightarrow{\ocirc} \begin{pmatrix}
    y_{11} & 0\\
    y_{21}-y_{31} & y_{22}-y_{32}
\end{pmatrix}^{\text{SACG}}_Y\]

Let $M_Y$ be the $2\times 2$ matrix given above for $Y$.  From Def. \ref{def-modified-Hermite-normal-form} we have that $3\mid \det M_Y$.  So by Cor. \ref{corollary-det-divis-by-3}, we have that $Y$ (and hence $X$) is $3$-colorable unless $Y$ has loops.  (We remark that we imposed the third condition in Def. \ref{def-modified-Hermite-normal-form} specifically so that we can use Cor. \ref{corollary-det-divis-by-3} right here.)  By Lemma \ref{lemma-loops-2-by-2} and Def. \ref{def-modified-Hermite-normal-form}, we have that either (i) $y_{22}-y_{32}=-1$, or (ii) $y_{11}=1$ and $y_{22}-y_{32}\mid y_{21}-y_{31}$.

Suppose (i) holds.  Because $3\mid \det M_Y$, we must have that $3\mid y_{11}$.  Now consider \[\begin{pmatrix}
    y_{11} & 0\\
    y_{21} & y_{22}\\
    y_{31} & y_{32}
\end{pmatrix}^{\text{SACG}}_X\xrightarrow{\ocirc} \begin{pmatrix}
    y_{11} & 0\\
    y_{21}+ y_{31} & y_{22}+y_{32}
\end{pmatrix}^{\text{SACG}}\]

By Cor. \ref{corollary-det-divis-by-3}, this produces a $3$-coloring unless the target graph has loops.  But by Lemma \ref{lemma-loops-2-by-2}, this occurs if and only if $y_{22}+y_{32}=\pm 1$, which gives us that $y_{22}=0$ and $y_{32}=1$.  But then the first statement in the theorem holds, contrary to assumption.

Thus (ii) holds.  Because $3\mid \det M_Y$, we must have $y_{32}=y_{22}+3k$ for some integer $k\geq 0$.  (Here we use that $y_{32}\geq y_{22}$.)  Also $\ell(y_{22}-y_{32})=y_{21}-y_{31}$ for some integer $\ell$, which gives us that $y_{31}=y_{21}+3k\ell$.  So we have \begin{equation}\label{equation-MX}
M_X=\begin{pmatrix}
    1 & 0\\
    y_{21} & y_{22}\\
    y_{21}+3k\ell & y_{22}+3k
\end{pmatrix}.\end{equation}


Take the mapping
\[
    \begin{pmatrix}
    1 & 0 \\
    y_{21} & y_{22} \\
    y_{21}+3k\ell & y_{22}+3k
    \end{pmatrix}^{\text{SACG}}_X
    \xrightarrow{\ocirc}
    \begin{pmatrix}
    1 & 0\\
    2y_{21}+3k\ell & 2y_{22}+3k
    \end{pmatrix}^{\text{SACG}}_{Y'}.
\]

Either $Y'$ has loops, or else by Lemma \ref{lemma-sufficient-condition-to-be-properly-given-circulant} we have that $Y'$ is isomorphic to the Heuberger circulant $C_{n'}(a',b')$ with $n'=2y_{22}+3k$ and $a'=-2y_{21}-3k\ell$ and $b'=1$.

\vspace{.05in}

\guillemotright\, First suppose that $Y'$ has loops.  By Lemma \ref{lemma-loops-2-by-2}, this occurs if and only if $2y_{22}+3k\mid 2y_{21}+3k\ell$.  Then $2y_{21} + 3k\ell = (2y_{22} + 3k)q$ for some $q \in \Z$. So $y_{21} = qy_{22} + \frac32 k(q - \ell)$. Letting $t = q - \ell$, by various columm operations we have
\begin{align*}
    \begin{pmatrix}
    1 && 0 \\
    y_{21} && y_{22} \\
    y_{21} + 3k\ell && y_{22} +3k
    \end{pmatrix}^{\text{SACG}}_{X}
    & =
    \begin{pmatrix}
    1 && 0 \\
    qy_{22} + \frac32 k(q - \ell) && y_{22} \\
    qy_{22} + \frac32 k(q + \ell) && y_{22} +3k
    \end{pmatrix}^{\text{SACG}}_{X}
    =
    \begin{pmatrix}
    1 && 0 \\
    \frac32 k(q - \ell) && y_{22} \\
    \frac32 k(-q + \ell) && y_{22} +3k
    \end{pmatrix}^{\text{SACG}}_{X}
    \\
    & =
    \begin{pmatrix}
    1 && 0 \\
    \frac32 kt && y_{22} \\
    - \frac32 kt && y_{22} +3k
    \end{pmatrix}^{\text{SACG}}_{X}
    \xrightarrow{\ocirc}
    \begin{pmatrix}
    \frac32 kt && y_{22} \\
    -1 - \frac32 kt && y_{22} +3k
    \end{pmatrix}^{\text{SACG}}_{Y''}
\end{align*}

Either $Y''$ has loops, or else by Lemma \ref{lemma-sufficient-condition-to-be-properly-given-circulant} we have that $Y''$ is isomorphic to the Heuberger circulant $C_{n''}(a',b')$ with $n''=(3kt + 1) y_{22} + \frac92 k^2t$ and $a''=\frac32 kt+1$ and $b''=\frac32 kt$.

\vspace{.05in}

\guillemotright\, \guillemotright\, First suppose $Y''$ has loops. Let $M_{Y''}$ be the given matrix for $Y''$.  By Lemma \ref{lemma-loops-2-by-2} we have that either the top or bottom row of $M_{Y''}$ is zero, or else $n''$ divides every entry in a row of $M_{Y''}$.  If the first row is zero, then $kt = 0$ and $X$ had loops to begin with, contrary to assumption. If the second row is zero, then we have $-1 - \frac32 kt = 0$ and $y_{22}+3k=0$, so
\[
    \begin{pmatrix}
    1 && 0 \\
    \frac32 kt && y_{22} \\
    - \frac32 kt && y_{22} +3k
    \end{pmatrix}^{\text{SACG}}_{X}
    =
    \begin{pmatrix}
    1 && 0 \\
    -1 && -3k \\
    1 && 0
    \end{pmatrix}^{\text{SACG}}_{X}
    \cong 
    \begin{pmatrix}
    1 && 0 \\
    1 && 3k \\
    1 && 0
    \end{pmatrix}^{\text{SACG}},\]which is $3$-colorable by Lemma \ref{lemma-first-column-pos-1-neg-1-pos-1}.

Now suppose that $n''$ divides every entry in either the first or second row of $M_{Y''}$.  We will work out here the details of the former case; for the latter, which is similar, see the authors' notes at \cite{Cervantes-Krebs}.  We have that $n'' \mid \frac32 kt$ and $n''\mid y_{22}$.  Observe that because $X$ does not have loops, hence $t \neq 0$ and $k\neq 0$ (and therefore $k>0$).  We split into cases according to whether $t$ is positive or negative.

First suppose $t>0$.  From $n'' \mid \frac32kt$ we get
\begin{gather*}
    -\frac32kt \leq(3kt+1)y_{22} + \frac92k^2t \leq \frac32kt \\
\;\\    
\frac{-\frac32kt - \frac92k^2t}{3kt+1} \leq y_{22} \leq \frac{\frac32kt - \frac92k^2t}{3kt+1} \qquad \text{because }t>0\\
\;\\
-\frac12-\frac32 k< y_{22} \leq -\frac12-\frac32 k+\frac{-\frac12 +\frac32 k}{3kt+1}<-\frac32 k
\end{gather*}This cannot happen, because both $k$ and $y_{22}$ are integers.

Now suppose $t<0$.  From $n'' \mid \frac32kt$ after some calculations we get that \[\frac12 - \frac32k > y_{22} \geq -\frac32 - \frac32k.\]
Using the fact that $y_{22}$ and $k$ are integers, this tells us that $y_{22}=-\frac32 k+\epsilon$ where $\epsilon\in\{0, -\frac12, -1, -\frac32\}$.  We will work out here only the cases where $\epsilon=0$ and $\epsilon=-1$; the other two cases are similar, and details can be found at \cite{Cervantes-Krebs}.

\guillemotright\, \guillemotright\, \guillemotright\, Suppose $\epsilon=0$.  Then we have
\begin{equation*}
    \begin{pmatrix}
        1 & 0 \\
        \frac32kt & y_{22} \\
        -\frac32kt & y_{22}+3k
    \end{pmatrix}^{\text{SACG}}_X
    =
    \begin{pmatrix}
        1 & 0 \\
        \frac32kt & -\frac32k \\
        -\frac32kt & \frac32k
    \end{pmatrix}^{\text{SACG}}_X
    =
    \begin{pmatrix}
        1 & 0 \\
        0 & -\frac32k \\
        0 & \frac32k
    \end{pmatrix}^{\text{SACG}}_X
.\end{equation*}
But this would mean that $X$ has loops, contrary to assumption.

\guillemotright\, \guillemotright\, \guillemotright\, Suppose $\epsilon=-1$.
So we must have $y_{22} = -1 - \frac32k$, which gives us that $n''=-3kt - \frac32k -1$.  Because $y_{22}\in\mathbb{Z}$ and $k>0$, it follows that $k$ must be even, so $y_{22}\leq -4$.  From $n'' \mid y_{22}$, we get that \[-1-\frac32 k\leq n''\leq 1+\frac32 k,\] which implies that $0\geq t\geq -\frac53$ and hence $t=-1$.  Recall that \[y_{21} = qy_{22} + \frac32 k(q - \ell) = qy_{22}+\frac32 kt=qy_{22}-\frac32 k=(q+1)y_{22}+1.\]By Def. \ref{def-modified-Hermite-normal-form}, we have that $\frac{y_{22}}{2}=-\frac{|y_{22}|}{2}\leq y_{21}\leq 0$.  Thus we have $y_{22}\leq (2q+2)y_{22}+2\leq 0$.  Solving for $q$ and using that $y_{22}\leq -4$, we find that $-\frac14\geq q>-1$, which is a contradiction, since $q$ is an integer.

\vspace{.1in}

\guillemotright\, \guillemotright\, Now suppose that $Y''$ does not have loops, so that $Y''$ is isomorphic to the Heuberger circulant $C_{n''}(a'',b'')$.  We have that $Y''$ (and hence $X$) is $3$-colorable unless one of the six exceptional cases in Thm. \ref{theorem-Heubergers} occurs.  Here we go through only the case where $b''\equiv -2a''\Mod{n''}$ and $3\nmid n''$, and even then, only a few of the illustrative sub-cases.  The others can be found at \cite{Cervantes-Krebs}---as we go along, we will no longer mention that every time.

The condition $b''\equiv -2a''\Mod{n''}$ implies that $n''\mid \frac92 kt+2$.  We split into cases according to whether $t$ is positive or negative, and we discuss here only the case $t<0$.  We have \[\frac92 kt+2\leq (3kt+1)y_{22}+\frac92 k^2t\leq -\frac92 kt-2.\]  After some algebra this leads to \[-\frac32 k+\frac32>y_{22}\geq -\frac32 k-2.\]

Because $3\nmid n''=(3kt+1)y_{22}+\frac92 k^2t$, we have that $3\nmid y_{22}$.  Hence, because $k$ and $y_{22}$ are integers, we have that $y_{22}=-\frac32 k+\epsilon$ where $\epsilon\in\{1, \frac12, -\frac12, -1, -2\}$.  We cover here only the cases $\epsilon=\frac12$ and $\epsilon=-\frac12$, as they illustrate some of the various possibilities that can arise.

\guillemotright\,\guillemotright\,\guillemotright\,Suppose $\epsilon=\frac12$.  It follows that $n''=-\frac32k(1-t)+\frac12$, from which we find that $k$ is odd and $t$ is even.  From $n''\mid \frac92 kt+2$ we get that $n''\mid \frac92 k+\frac12$ gives us that $n''=\pm 1$.  Thus \[\frac92 k+\frac12\leq -\frac32k(1-t)+\frac12.\]Solving for $t$, we find that \[-4-\frac{2}{3k}\leq t,\]which gives us that $t=-2$ or $t=-4$.  If $t=-2$, then $n''=-\frac92k+\frac12\leq -4$, which together with $n''\mid \frac92 k+\frac12$ gives us that $n''\mid 1$, a contradiction.  So $t=-4$.  Then $n''=-\frac{15}{2}k+\frac12\leq -7$.  Thus $n''$ divides $5(\frac92 k+\frac12)+3n''=4$, a contradiction.

\guillemotright\,\guillemotright\,\guillemotright\,Suppose $\epsilon=-\frac12$.  It follows that $n''=-\frac32k(1+t)-\frac12$.  Thus $k$ is odd and $t$ is even.  Using the same sort of logic as in the $\epsilon=\frac12$ case, we get that $n''\mid-\frac92k+\frac12$, and we again find that $t=-2$ or $t=-4$.  If $t=-2$, we have $n''=\frac32 k-\frac12>0$.  So $n''$ divides $-\frac92k+\frac12+3n''=-1$, giving us $n''=1$, in turn giving us $k=1$.  But then $y_{22}=-\frac32 k+\epsilon=-2$ and $y_{32}=y_{22}+3k=1$, violating the condition $|y_{22}|\leq |y_{32}|$ from Def. \ref{def-modified-Hermite-normal-form}.

So $t=-4$.  This gives us that $n''=\frac92 k-\frac12\geq 4$.  By the choice of $\epsilon$, we have $y_{22}=-\frac32 k-\frac12\leq -2$.  By Def. \ref{def-modified-Hermite-normal-form} and our previous formula for $y_{21}$, we have that \begin{equation}\label{equation-MHNF}
-\frac34 k-\frac14\leq q\left(-\frac32 k-\frac12\right)+\frac32 k(-4)\leq 0.    
\end{equation}Solving for $q$, we find that \begin{equation}\label{equation-solve-for-q}
-5\geq -7+\frac{8}{3k+1}\geq 2q\geq -8.\end{equation}Because $q$ is an integer, this gives us that $q=-3$ or $q=-4$.  If $q=-3$, then from (\ref{equation-solve-for-q}) and from the fact that $k$ is an odd positive integer, we get that $k=1$.  But then $y_{22}=-\frac32 k+\epsilon=-2$ and $y_{32}=y_{22}+3k=1$, violating the condition $|y_{22}|\leq |y_{32}|$ from Def. \ref{def-modified-Hermite-normal-form}.  So $q=-4$.  But this contradicts (\ref{equation-MHNF}).

\vspace{.05in}

\guillemotright\, Now suppose that $Y'$ does not have loops, so $Y'$ is isomorphic to the Heuberger circulant $C_{n'}(a',b')$ with $n'=2y_{22}+3k$ and $a'=-2y_{21}-3k\ell$ and $b'=1$.  We have that $Y'$ (and hence $X$) is $3$-colorable unless one of the six exceptional cases in Thm. \ref{theorem-Heubergers} occurs.  We work out here with some granularity only one of these cases, namely where $3\nmid n'$ and $a'\equiv 2b'\Mod{n'}$.  The other five cases are handled similarly.  Complete details can be found in our authors' notes, which are housed on the second author's website \cite{Cervantes-Krebs}.

From $3\nmid n'$ we get that $3\nmid y_{22}$. From $a'\equiv 2b'\Mod{n'}$ we get that $y_{21}=qy_{22}-1+\frac32 k(q-\ell)$ for some integer $q$.  Let $t=q-\ell$.  Note $k$ or $t$ must be even.  We have:

\[
    \begin{pmatrix}
    1 & 0 \\
    y_{21} & y_{22} \\
    y_{21}+3k\ell & y_{22}+3k
    \end{pmatrix}^{\text{SACG}}_X
    =\begin{pmatrix}
    1 & 0 \\
    qy_{22}-1+\frac32 k(q-\ell) & y_{22} \\
    qy_{22}-1+\frac32 k(q-\ell)+3k\ell & y_{22}+3k
    \end{pmatrix}^{\text{SACG}}_X\]
    
\begin{equation*}\label{equation-X-Case-3-with-t}=\begin{pmatrix}
    1 & 0 \\
    qy_{22}-1+\frac32 k(q-\ell) & y_{22} \\
    qy_{22}-1+\frac32 k(q+\ell) & y_{22}+3k
    \end{pmatrix}^{\text{SACG}}_X=\begin{pmatrix}
    1 & 0 \\
    -1+\frac32 k(q-\ell) & y_{22} \\
    -1+\frac32 k(-q+\ell) & y_{22}+3k
    \end{pmatrix}^{\text{SACG}}_X=\begin{pmatrix}
    1 & 0 \\
    -1+\frac32 kt & y_{22} \\
    -1-\frac32 kt & y_{22}+3k
    \end{pmatrix}^{\text{SACG}}_X\end{equation*}

Suppose $t=0$.  Then $y_{21}=qy_{22}-1$.  From Def. \ref{def-modified-Hermite-normal-form} we have that $-\frac{|y_{22}|}{2}\leq y_{21}\leq 0$.  Suppose $y_{22}>0$.  If $y_{22}=1$, then $y_{21}=0$, and in this case the theorem now follows from Lemma \ref{lemma-I-on-top}.  So we may assume that $y_{22}\geq 2$.  Then from $qy_{22}-1\leq 0$ we get that $q\leq 0$.  From $-\frac{y_{22}}{2}\leq qy_{22}-1$ we then get that $q=0$.  From $t=q-\ell$ we then get $\ell=0$.  So $y_{21}=y_{31}=-1$.  After multiplying the bottom row of $M_X$ by $-1$, the theorem now holds by Lemma \ref{lemma-first-column-pos-1-neg-1-pos-1}.  The same is true for similar reasons if $y_{22}<0$.  Hence we may assume that $t\neq 0$.  A similar argument shows that we may assume that $k\neq 0$.

We divide now into cases according to whether $t>0$ or $t<0$.  We write here only about the case $t<0$, the case $t>0$ being similar.

Consider the mapping\[\begin{pmatrix}
    1 & 0 \\
    -1+\frac32 kt & y_{22} \\
    -1-\frac32 kt & y_{22}+3k
    \end{pmatrix}^{\text{SACG}}_X\xrightarrow{\ocirc}\begin{pmatrix}
    \frac32 kt & y_{22} \\
    -1-\frac32 kt & y_{22}+3k
    \end{pmatrix}^{\text{SACG}}_{Y''}\]

Either $Y''$ has loops, or else by Lemma \ref{lemma-sufficient-condition-to-be-properly-given-circulant} we have that $Y''\cong C_{n''}(a'', b'')$ where $a''=1+\frac32 kt$ and $b''=\frac32 kt$ and $n''=(3kt+1)y_{22}+\frac92 k^2t$.  
Let $M_{Y''}$ be the Heuberger matrix for $Y''$ given above.  Because $kt\neq 0$ and $kt\in\mathbb{Z}$, it follows that neither row of $M_{Y''}$ is a zero row.  Lemma \ref{lemma-loops-2-by-2} then tells us that $Y''$ has loops if and only if $n''$ divides every entry in either the top or bottom row of $M_{Y''}$.  Otherwise, we have that $Y''$ (and hence $X$) is $3$-colorable unless one of the six exceptional cases in Theorem \ref{theorem-Heubergers} holds.  This gives us a total of eight cases to consider.  Of those, we write here only about the possibility that $n''$ divides both $-1-\frac32kt$ and $y_{22}+3k$.  The other seven cases can be managed using the same sort of techniques we've employed throughout this subsection; a full exposition can be found in \cite{Cervantes-Krebs}.

To recap, we now assume that $n''\mid -1-\frac32kt$ and $n''\mid y_{22}+3k$ and $t< 0$ and $k>0$.  So \[1+\frac32 kt\leq (3kt+1)y_{22}+\frac92 k^2t\leq -1-\frac32 kt.\]Solving for $y_{22}$ we find that \[\frac12-\frac32 k>y_{22}\geq -1-\frac32 k.\]Because $k,y_{22}\in\mathbb{Z}$, we have that $y_{22}=-\frac32 k+\epsilon$ where $\epsilon\in\{-1,-\frac12,0\}$.  We write here only about the case where $\epsilon=-1$.  The cases where $\epsilon=-\frac12$ or $\epsilon=0$ use the same sorts of techniques we've seen previously.  So assume $y_{22}=-\frac32 k-1$.  Then $n''=-3k(\frac12 +t)-1$ and $y_{22}+3k=\frac32 k-1>0$.  So from $n''\mid y_{22}+3k$ we have that \[-3k\left(\frac12 +t\right)-1\leq \frac32 k-1.\]Solving for $t$, we find that $t\leq -1$.  Because $t$ is a negative integer, this implies that $t=-1$.  Recall that $y_{21}=qy_{22}-1+\frac32 kt=(q+1)y_{22}$.  By Def. \ref{def-modified-Hermite-normal-form} we have that $\frac{y_{22}}{2}\leq y_{21}=(q+1)y_{22}\leq 0$.  Here we use that $y_{22}<0$.  Dividing by $y_{22}$ we get that $\frac12\geq q+1\geq 0$, so $q=-1$, because $q$ is an integer.  Then $y_{21}=0$.  From $t=q-\ell$ we get $\ell=0$, so $y_{21}=0$.  But then $X$ had loops, contrary to assumption.\end{proof}

The operations performed to put a $3\times 2$ matrix $M$ into modified Hermite normal form do not affect the gcd of the determinants of the $2\times 2$ minors of $M$.  Hence we have the following corollary to Theorem \ref{theorem-m-equals-3}, in analogy to Corollary \ref{corollary-det-divis-by-3}.

\begin{Cor}\label{corollary-minors-det-divis-by-3}
Suppose $X$ is a standardized abelian Cayley graph with an associated $3\times 2$ Heuberger matrix $M_X$.  If $X$ does not have loops, and if $3$ divides the determinant of every $2\times 2$ minor of $M_X$, then $X$ is $3$-colorable.
\end{Cor}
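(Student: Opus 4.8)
The plan is to mirror the proof of Corollary \ref{corollary-det-divis-by-3}, reducing to modified Hermite normal form and then invoking Theorem \ref{theorem-m-equals-3}. First I would dispose of the two degenerate situations in which Lemma \ref{lemma-modified-Hermite-normal-form} does not directly apply. If $M_X$ has a zero row, then deleting it via \cite[Lemma \ref{general-lemma-delete-row-all-zeroes}]{Cervantes-Krebs-general} yields a graph with the same chromatic number and the same loop status whose associated $2\times 2$ matrix is exactly the minor of $M_X$ formed by the two nonzero rows; its determinant is divisible by $3$ by hypothesis, so Corollary \ref{corollary-det-divis-by-3} gives $\chi(X)\le 3$. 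If instead the columns of $M_X$ are linearly dependent over $\mathbb{Q}$, then all three $2\times 2$ minors vanish (so the hypothesis is automatic), and column operations produce a zero column whose deletion leaves a rank-$1$ matrix; here the loopless hypothesis forces $\chi(X)\le 3$ by the rank-$1$ theory in \cite[Theorem \ref{general-theorem-tree-guard}]{Cervantes-Krebs-general}.

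In the remaining case $M_X$ has no zero rows and linearly independent columns, so by Lemma \ref{lemma-modified-Hermite-normal-form} I may replace $X$ by an isomorphic graph whose matrix is in modified Hermite normal form. The key point is that, as remarked immediately before the corollary, this reduction does not change the gcd of the three $2\times 2$ minor determinants; hence the hypothesis that $3$ divides each minor determinant is inherited by the new matrix, and looplessness is preserved since the graphs are isomorphic. I would then feed the reduced matrix into Theorem \ref{theorem-m-equals-3}. Its first conclusion (loops) is excluded by hypothesis, and its second and fourth conclusions already yield $\chi(X)\in\{2,3\}$. Thus everything comes down to showing that the divisibility hypothesis is incompatible with each of the six families of matrices appearing in conclusion \ref{item-six-bad-cases}, the families for which $\chi(X)=4$.

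The crux --- and really the only genuine computation --- is this last incompatibility check. For each of the six families I would simply evaluate the determinant of the $2\times 2$ minor obtained from the top two rows. Reading the families in the order listed in conclusion \ref{item-six-bad-cases} of Theorem \ref{theorem-m-equals-3}, these determinants are $1$, $-1$, $2$, $-2$, $-1$, and (in the last family) $a$ with $3\nmid a$. None of these is divisible by $3$, so no matrix satisfying our hypothesis can lie in any of the six families. Consequently conclusion \ref{item-six-bad-cases} cannot occur, and Theorem \ref{theorem-m-equals-3} leaves only $\chi(X)\le 3$.

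I expect the main obstacle to be organizational rather than mathematical: one must make sure the two degenerate reductions are airtight --- in particular that the rank-$1$ case is genuinely $3$-colorable once loops are excluded, and that the divisibility hypothesis really does survive the passage to modified Hermite normal form. Once those are in place, Theorem \ref{theorem-m-equals-3} does all the heavy lifting and the six-family check is immediate.
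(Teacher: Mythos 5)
Your proposal is correct and takes essentially the same route as the paper's own proof: dispose of the zero-row case via Corollary \ref{corollary-det-divis-by-3} and the rank-deficient case via the rank-$1$ theory, reduce to modified Hermite normal form using the invariance of the gcd of the $2\times 2$ minor determinants, and then rule out the six exceptional families in statement (\ref{item-six-bad-cases}) of Theorem \ref{theorem-m-equals-3}. Your explicit top-two-row minor determinants ($1$, $-1$, $2$, $-2$, $-1$, and $a$ with $3\nmid a$) are all correct, making your verification of the six-family check slightly more detailed than the paper's, which merely asserts that each family has some minor with determinant not divisible by $3$.
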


\begin{proof}If $M_X$ has a zero row, we may delete it without affecting the chromatic number, so in this case, the result follows from Cor. \ref{corollary-det-divis-by-3}.  If the columns of $M_X$ are linearly dependent over $\mathbb{Q}$, then after appropriate column operations we obtain a zero column, whereupon the result follows from \cite[Thm. 2.15]{Cervantes-Krebs-general-publilshed}.  Otherwise, as noted just before the statement of this corollary, we may assume that $M_X$ is in modified Hermite normal form.  But for each of the six types of matrices $M_X$ in the third statement in Theorem \ref{theorem-m-equals-3} for which $\chi(X)>3$, at least one $2\times 2$ minor has a determinant not divisible by $3$.  The result follows.\end{proof}

It would be interesting to know whether Cor. \ref{corollary-minors-det-divis-by-3} holds for matrices of arbitrary size.  We conjecture that it does.  Note that this is equivalent to the conjecture that if the rank of the reduction of $M_X$ modulo $3$ is $\leq 1$, then $\chi(X)\leq 3$.

\vspace{.05in}

Indeed, we remark that Thm. \ref{theorem-m-equals-3} can be recast entirely so that one can determine the chromatic number directly from an arbitrary Heuberger matrix $M$, not necessarily in modified Hermite normal form.  Namely, let $\alpha, \beta, \gamma$ be the absolute values of the determinants of the $2\times 2$ minors of $M$, such that $\alpha\leq \beta\leq \gamma$.  The six exceptional cases in Thm. \ref{theorem-m-equals-3} occur precisely when (i) one row of $M$ has relatively prime entries, and (ii) $\alpha>0$, and (iii) either $\{\alpha, \beta, \gamma\}=\{1,2,3k\}$ for some positive integer $k$, or else $\gamma=\alpha+\beta$ and $\alpha\not\equiv\beta\Mod{3}$.  The other cases (bipartite, loops, zero row, etc.) can easily be characterized directly from $M$.

\vspace{.05in}

We previously noted that for a standardized abelian Cayley graph $X$ with an associated $2\times 2$ Heuberger matrix $M_X$:

(*) If $X$ does not have loops, then $X$ fails to be $4$-colorable if and only if it contains $K_5$ as a subgraph, and it fails to be $3$-colorable if and only if it contains either $C_{13}(1,5)$ or a diamond lanyard as a subgraph.

Consequently, (*) holds for a $3\times 2$ matrix with a zero row, as the corresponding graph equals a box product of a doubly infinite path graph and a graph with a $2\times 2$ matrix.  Each of the six exceptional cases in the third statement in Theorem \ref{theorem-m-equals-3} contains a diamond lanyard as a subgraph, as we saw in the proofs of Lemmas \ref{lemma-first-column-pos-1-neg-1-pos-1}, \ref{lemma-L-shaped}, and \ref{lemma-I-on-top}.  Thus (*) holds also for every standardized abelian Cayley graph $X$ with an associated $3\times 2$ Heuberger matrix $M_X$.

\subsection{An algorithm to find the chromatic number for $1\times r, m\times 1, 2\times r$, or $3\times 2$ matrices}\label{subsection-algorithm}

In this subsection, we provide a ``quick-reference guide'' to the results of this section.  Specifically, we spell out a procedure to determine the chromatic number of a standardized abelian Cayley graph with a Heuberger matrix $M_X$ of size $1\times r, m\times 1, 2\times r$, or $3\times 2$, where $m$ and $r$ are positive integers.  This procedure can easily be converted into code or pseudocode.  Indeed, an implementation of this algorithm in {\it Mathematica} can be found at \cite{Cervantes-Krebs}.

$\bullet$ If $M_X$ is of size $m \times 1$, apply \cite[Theorem 2.15]{Cervantes-Krebs-general-publilshed}.

$\bullet$ If $M_X$ is of size $1 \times r$, apply Lemma \ref{lemma-m-is-1}.

$\bullet$ If $M_X$ is of size $2 \times 2$, apply column operations as per \cite[Lemma 2.6]{Cervantes-Krebs-general-publilshed} to produce a lower-triangular matrix.  Then apply Theorem \ref{theorem-m-equals-2}; if the last statement in that theorem holds, use Theorem \ref{theorem-Heubergers} to complete the final step in the computation.

$\bullet$ If $M_X$ is of size $2\times r$ for $r>2$, perform column operations as per \cite[Lemma 2.6]{Cervantes-Krebs-general-publilshed} to produce a zero column.  Delete that column, and iterate this procedure until you have a $2\times 2$ matrix.  Then use the procedure from the previous bullet point.

$\bullet$ If $M_X$ is of size $3 \times 2$, do the following.  If $M_X$ has a zero row, delete that row, then use the procedure for $2\times 2$ matrices to find the chromatic number of the graph with the resulting matrix.  If the columns of $M_X$ are linearly dependent over $\mathbb{Q}$, perform column operations as per \cite[Lemma 2.6]{Cervantes-Krebs-general-publilshed} to produce a zero column.  Delete that column, and then apply \cite[Theorem 2.15]{Cervantes-Krebs-general-publilshed}.  Otherwise, use row and column operations as per Lemma \ref{lemma-modified-Hermite-normal-form} to find an isomorphic graph $X'$ with a Heuberger matrix $M_{X'}$ in modified Hermite normal form.  Then apply Theorem \ref{theorem-m-equals-3}.

\section*{Acknowledgments}

The authors wish to thank Tim Harris for his careful read of an early version of this manuscript and for his many helpful suggestions.

\bibliographystyle{amsplain}
\bibliography{references}




\end{document}